\lstdefinelanguage{GAP}{%
  morekeywords={%
    Assert,Info,IsBound,QUIT,%
    TryNextMethod,Unbind,and,break,%
    continue,do,elif,%
    else,end,false,fi,for,%
    function,if,in,local,%
    mod,not,od,or,%
    quit,rec,repeat,return,%
    then,true,until,while%
  },%
  sensitive,%
  morecomment=[l]\#,%
  morestring=[b]",%
  morestring=[b]',%
}[keywords,comments,strings]
\newtheorem{theorem}{Theorem}[section]
\newtheorem{lemma}[theorem]{Lemma}
\newtheorem{proposition}[theorem]{Proposition}
\theoremstyle{definition}
\def\fis#1{\mathord{\mathrm{Fi}_{#1}}}
\def\ja#1{\mathord{\mathrm{J}_{#1}}}
\def\ru{\mathord{\mathrm{Ru}}}
\def\ly{\mathord{\mathrm{Ly}}}
\def\ON{\mathord{\mathrm{O}^{^{\textstyle\mathrm{,}}}\mathrm{N}}}
\def\tF#1{\mathord{{}^2\mathrm{F}_4(#1)}}
\def\tD#1{\mathord{{}^3\mathrm{D}_4(#1)}}
\def\tE#1{\mathord{{}^2\mathrm{E}_6(#1)}}
\def\tB#1{\mathord{{}^2\mathrm{B}_2(#1)}}
\def\tG#1{\mathord{{}^2\mathrm{G}_2(#1)}}
\theoremstyle{remark}
\begin{document}
\title[Simple groups with short Galois orbits]{Finite simple groups with short Galois orbits on conjugacy classes}

\author[Bovdi]{Victor Bovdi}
\address{Department of Mathematical Sciences, UAEU, Al-Ain, United Arab Emirates}
\email{vbovdi@gmail.com}

\author[Breuer]{Thomas Breuer}
\address{Lehrstuhl D f\"ur Mathematik, RWTH Aachen University, 52065 Aachen, Germany}
\email{sam@math.rwth-aachen.de}

\author[Mar\'oti]{Attila Mar\'oti}
\address{Alfr\'ed R\'enyi Institute of Mathematics, Hungarian Academy of Sciences, Re\'alta-noda utca 13-15, H-1053, Budapest, Hungary}
\email{maroti.attila@renyi.mta.hu}

\keywords{integral group ring, finite simple group}
\subjclass[2010]{16U60; 16S34;   20E45; 20K15; 20D05}

\thanks{The work of the first author was supported by UAEU UPAR grant G00002160. The second author gratefully acknowledges support by the German Research Foundation (DFG) within the SFB-TRR 195 ``Symbolic Tools in Mathematics and their Application''. The work of the third author on the project leading to this application has received funding from the European Research Council (ERC) under the European Union's Horizon 2020 research and innovation programme (grant agreement No. 741420), was supported by the J\'anos Bolyai Research Scholarship of the Hungarian Academy of Sciences and also by the National Research, Development and Innovation Office (NKFIH) Grant No.~K115799}

\begin{abstract}
All finite simple groups are determined with the property that every Galois orbit on conjugacy classes has size at most $4$. From this we list all finite simple groups $G$ for which the normalized group of central units of the integral group ring $\mathbb{Z}G$ is an infinite cyclic group.
\end{abstract}
\maketitle

\begin{center}
{\it Dedicated to Professors \'Agnes Szendrei and M\'aria B. Szendrei \\ on the occasion of their birthdays.}
\end{center}

\bigskip

\section{Introduction}

According to \cite[Satz 14.9, p.\,545]{Huppert_book} (see also \cite[Theorem 3.2, p. \,32]{Bovdi_book}), the group of units of the ring of integers of the center of the rational group algebra $\mathbb{Q}G$ of a finite group $G$ is isomorphic to the direct product of the groups of units of the rings of integers of the fields $\mathbb{Q}(\chi)$ where $\chi$ runs over a set of representatives of the orbits of algebraically conjugate irreducible complex characters of $G$.

Moreover, for a finite group $G$, the group of central units of the integral group ring  $\mathbb{Z}G$ is isomorphic, by the theorem of Berman and Higman (see \cite[1.1, p.\,5]{Artamonov_Bovdi} and \cite[Theorem 2.2, p.\,23]{Bovdi_book}), to $\langle - 1 \rangle \times Z(G) \times R(G)$ where $R(G) = 1$ or $R(G)$ is a finitely generated torsion-free abelian subgroup of $\mathbb{Z}G$ with elements of augmentation $1$. A systematic study of central units was first launched by A.~Bovdi (see for example \cite[Chapter 8]{Bovdi_book}). In particular, in \cite[Lemma 8.1, p.\;81]{Bovdi_book} he describes the basic situation when $R(G)=1$. For a finite group $G$ we have $R(G) = 1$ if and only if the character field $\mathbb{Q}(\chi)$ of each complex irreducible character $\chi$ of $G$ is either $\mathbb{Q}$ or imaginary quadratic (of the form $\mathbb{Q}(\sqrt{d})$ for some $d < 0$ in $\mathbb{R}$). This happens if and only if every generator of every cyclic group $\langle g\mid g \in G\rangle$ is conjugate in $G$ to $g$ or to $g^{-1}$. We give a generalization of this statement in Proposition \ref{bounded}.

As a natural consequence of A.~Bovdi's result, a not necessarily finite group $G$ is called a {\it cut}-group if all central units of $\mathbb{Z}G$ are trivial (of the form $\pm g$ where $g \in Z(G)$), see \cite{Bakshi_Maheshwary_Passi}. The first results on {\it cut}-groups were obtained by A.~Bovdi and Patay. Recent results and references on this topic can be found in \cite{Bachle, Bakshi_Maheshwary_Passi} and \cite{Bakshi_Maheshwary_Passi_II}.

The description of finite nilpotent {\it cut}-groups of class 2 was obtained by Patay (see  \cite[Theorem 8.2, p.\,83]{Bovdi_book}, \cite{Patay_I} and \cite{Patay}). B\"achle \cite[Theorem 1.2]{Bachle} proved that any prime divisor of the order of a finite solvable {\it cut}-group is $2$, $3$, $5$, or $7$.


In the 1970's, Patay (as a PhD student of A.~Bovdi) was the first to study the question of when a finite simple group is a {\it cut}-group (he answered this for alternating groups). Afterwards, several people continued to investigate this question and for a long time the most extensive results on this topic were obtained by Aleev and his students (see \cite{Aleev_4, Aleev_dissertation, Aleev_Ishechkina, Aleev_Kargapolov_Sokolov, Aleev_Kargapolov_Sokolov, Molodorich}). The (finite) list of finite simple {\it cut}-groups was deduced in \cite[Theorem 5.1]{BCJM} from a longer list of groups obtained in \cite{AD}. Note that in part (ii) of our Theorem \ref{mainmain} we give a corrected list of groups presented in \cite{AD}.

For a finite group $G$ we denote the rank of $R(G)$ by $\mathfrak{r}_{\mathbb{Z}}(G)$ or simply by $\mathfrak{r}_{\mathbb{Z}}$.

An aim of this paper is to determine all finite simple groups $G$ for which $\mathfrak{r}_{\mathbb{Z}}(G) = 1$.

\begin{theorem}
\label{rank1}
A finite simple group $G$ satisfies $\mathfrak{r}_{\mathbb{Z}}(G) = 1$ if and only if $G = A_n$ with $n \in \{\; 5, 6, 10, 11, 13, 16, 17, 21, 25\; \}$ or $G \in \{\; C_{5}, \mathrm{PSL}(2,11), \mathrm{SL}(3,3), \mathrm{PSL}(3,4), \\ \mathrm{PSL}(4,3), \mathrm{PSp}(6,3), \mathrm{Sp}(8,2), \mathrm{P\Omega}(7,3), \mathrm{P\Omega}^{+}(8,3), \mathrm{G}_{2}(3), \mathrm{F}_{4}(2), \tE{2}, \fis{22}\; \}$.
\end{theorem}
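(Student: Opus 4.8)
The plan is to deduce the statement from the main classification of this paper --- the list of finite simple groups in which every Galois orbit on conjugacy classes has size at most~$4$ --- combined with the description of central units recalled in the Introduction, and then to carry out a finite verification. First I would translate $\mathfrak{r}_{\mathbb{Z}}(G)=1$ into character-theoretic data. By the theorem of Huppert quoted above, the theorem of Berman and Higman, and Dirichlet's unit theorem,
\[
  \mathfrak{r}_{\mathbb{Z}}(G)=\sum_{\chi}\bigl(r_{1}(\mathbb{Q}(\chi))+r_{2}(\mathbb{Q}(\chi))-1\bigr),
\]
where $\chi$ runs over representatives of the Galois orbits on $\mathrm{Irr}(G)$ and $r_{1},r_{2}$ count the real embeddings, resp.\ pairs of complex embeddings. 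Each $\mathbb{Q}(\chi)$ is an abelian number field, hence either totally real (exactly when $\chi$ is real-valued) or a CM field; writing $d=[\mathbb{Q}(\chi):\mathbb{Q}]$, which is the size of the Galois orbit of $\chi$, the corresponding summand is $d-1$ in the totally real case and $d/2-1$ in the CM case. So each summand is a non-negative integer, it vanishes precisely when $d=1$, or $d=2$ and $\chi$ is not real-valued, and it equals~$1$ precisely when $d=2$ and $\chi$ is real-valued, or $d=4$ and $\chi$ is not real-valued. Hence $\mathfrak{r}_{\mathbb{Z}}(G)=1$ forces every Galois orbit on $\mathrm{Irr}(G)$ to have size at most~$4$ --- an orbit of size $\geq 5$ contributes at least~$2$ --- with exactly one orbit real quadratic or CM of degree~$4$ and all other orbits of size~$1$ or imaginary quadratic; in particular no orbit has size~$3$ and no real-valued orbit has size~$4$.

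Next I would use the standard fact that, for $\Gamma=\mathrm{Gal}(\mathbb{Q}(\zeta_{n})/\mathbb{Q})$ with $n$ the exponent of $G$, the permutation actions of $\Gamma$ on $\mathrm{Irr}(G)$ and on the conjugacy classes of $G$ have the same multiset of orbit lengths (their permutation matrices are conjugate via the character table). Thus $\mathfrak{r}_{\mathbb{Z}}(G)=1$ implies that every Galois orbit of $G$ on conjugacy classes has size at most~$4$, so by the main classification of this paper $G$ belongs to an explicit list, namely all alternating groups $A_{n}$ with $n\geq 5$ together with finitely many further simple groups; it then remains to compute $\mathfrak{r}_{\mathbb{Z}}(G)$ for each of them.

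For the alternating groups this is a combinatorial problem. All irreducible characters of $A_{n}$ are rational except for conjugate pairs $\chi^{+},\chi^{-}$ indexed by the self-conjugate partitions $\lambda\vdash n$; such a pair either consists of two rational characters or spans a Galois orbit of size~$2$, and $\mathbb{Q}(\chi^{\pm})=\mathbb{Q}\bigl(\sqrt{D_{\lambda}}\bigr)$ where $D_{\lambda}=(-1)^{(n-\ell)/2}h_{1}\cdots h_{\ell}$ and $h_{1}>\dots>h_{\ell}$ are the distinct odd diagonal hook lengths of $\lambda$ (so $h_{1}+\dots+h_{\ell}=n$). Hence $\mathfrak{r}_{\mathbb{Z}}(A_{n})$ equals the number of self-conjugate partitions $\lambda\vdash n$ for which $D_{\lambda}$ is a positive non-square, and I would determine, by a parity and square-freeness analysis over the partitions of $n$ into distinct odd parts, for which $n$ this count is exactly~$1$, arriving at $n\in\{5,6,10,11,13,16,17,21,25\}$.

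Finally I would treat the finitely many remaining candidates directly: for each such $G$, list the Galois orbits on $\mathrm{Irr}(G)$, record for each orbit its size $d$ and whether the character is real-valued, add up the contributions $d-1$ or $d/2-1$, and keep exactly the groups whose total is~$1$; this produces $C_{5}$, $\mathrm{PSL}(2,11)$, $\mathrm{SL}(3,3)$, $\mathrm{PSL}(3,4)$, $\mathrm{PSL}(4,3)$, $\mathrm{PSp}(6,3)$, $\mathrm{Sp}(8,2)$, $\mathrm{P\Omega}(7,3)$, $\mathrm{P\Omega}^{+}(8,3)$, $\mathrm{G}_{2}(3)$, $\mathrm{F}_{4}(2)$, $\tE{2}$ and $\fis{22}$. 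The main obstacle is this last step, together with the alternating-group analysis: although everything is a finite verification, it involves the character tables of several groups of Lie type of very large order --- notably $\tE{2}$, $\mathrm{F}_{4}(2)$ and $\mathrm{P\Omega}^{+}(8,3)$ --- and of the sporadic group $\fis{22}$, and for each of them one must track not merely the sizes of the Galois orbits on $\mathrm{Irr}(G)$ but, orbit by orbit, whether the character field is totally real or CM, since this is exactly what separates rank~$1$ from rank~$3$ for orbits of size~$4$ and rank~$0$ from rank~$1$ for orbits of size~$2$; in practice one carries this out with the character-table library in GAP.
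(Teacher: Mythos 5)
Your proposal follows the same strategy as the paper: reduce $\mathfrak{r}_{\mathbb{Z}}(G)=1$ to the bound $f(G)\leq 4$, invoke Theorem~\ref{mainmain}, and finish by computing $\mathfrak{r}_{\mathbb{Z}}$ for the surviving candidates with the character-table library, exactly as Section~\ref{computational} does. Your orbit-by-orbit contribution formula (via Dirichlet's unit theorem) is equivalent to formula~(\ref{E:1}) and to the remark after Proposition~\ref{bounded}. One small caution: you derive the bound on the $\mathrm{Irr}(G)$ side and then transfer it to conjugacy classes by asserting that the two Galois actions ``have the same multiset of orbit lengths.'' Brauer's permutation lemma gives, for each $\alpha\in\mathcal{G}$, equal numbers of fixed points on $\mathrm{Irr}(G)$ and on $\mathcal{C}$; by Burnside this yields equal orbit \emph{numbers} for every subgroup of $\mathcal{G}$, which is all you need. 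That the two $\mathcal{G}$-sets are permutation-isomorphic is a strictly stronger claim and does not follow merely from the permutation matrices being linearly conjugate (linear equivalence of permutation representations of a non-cyclic abelian group does not force permutation equivalence). The paper avoids this entirely by working on the class side from the start, via $\mathfrak{r}_{\mathbb{Z}}=n_{\mathbb{R}}-n_{\mathbb{Q}}=\sum_{C}\bigl(|C/\langle\sigma\rangle|-1\bigr)\geq f(G)/2-1$; you can do the same by applying Brauer to both $\langle\sigma\rangle$ and $\mathcal{G}$ before decomposing the total into orbits.

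The genuine gap is the alternating-group branch. Because $f(A_{n})=2$ for every $n\geq 5$, the entire infinite family $\{A_{n}:n\geq 5\}$ survives the reduction to $f(G)\leq 4$, so ``everything is a finite verification'' is not correct: you must rule out $\mathfrak{r}_{\mathbb{Z}}(A_{n})=1$ for all but a bounded set of $n$. Your combinatorial reformulation is right --- count partitions of $n$ into distinct odd parts $\lambda_{1}>\dots>\lambda_{k}$ with $n\equiv k\pmod 4$ and $\prod\lambda_{i}$ a non-square --- but you never say how to bound $n$, and no ``parity and square-freeness analysis'' is finite by itself. One needs a growth estimate: the paper proves $\mathfrak{r}_{\mathbb{Z}}(A_{n})>c^{\sqrt{n}}$ for $n\geq 26$ (Proposition~\ref{alternating}, by placing a prime $p$ with $n/2<p<2n/3$ as one of the parts to kill the square condition, and counting the remaining partitions) and, for the exact classification, cites Aleev, Kargapolov and Sokolov. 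Without an argument of this kind your proposal leaves the alternating case open.
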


All groups $G$ listed in Theorem \ref{rank1} were shown to satisfy $\mathfrak{r}_{\mathbb{Z}}(G) = 1$ in \cite[p. 328-329]{Aleev_dissertation}. Theorem \ref{rank1} in the special case of alternating groups was obtained in \cite{Aleev_Kargapolov_Sokolov}.

In general, the following asymptotic statement holds.

\begin{theorem}
\label{secondmain}
There exists a universal constant $c>0$ such that whenever $G$ is a finite simple group then $k(G)^{c} < \mathfrak{r}_{\mathbb{Z}}(G) < k(G)$ provided that $\mathfrak{r}_{\mathbb{Z}}(G) > 1$ where $k(G)$ denotes the number of conjugacy classes of $G$. In particular, $\mathfrak{r}_{\mathbb{Z}}(G) \to \infty$ as the orders of the finite simple groups $G$ tend to infinity.
\end{theorem}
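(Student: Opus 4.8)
The plan is to reduce everything to a single identity for $\mathfrak{r}_{\mathbb{Z}}(G)$ and then to use the classification of finite simple groups for the lower bound. By the decomposition of the unit group of the center recalled in the introduction together with Dirichlet's unit theorem, if $\chi$ runs over representatives of the Galois orbits on $\mathrm{Irr}(G)$ and $K_\chi:=\mathbb{Q}(\chi)$, then $\mathfrak{r}_{\mathbb{Z}}(G)=\sum_\chi\bigl(r_1(K_\chi)+r_2(K_\chi)-1\bigr)$, where $r_1(K)$ and $r_2(K)$ count the real and complex places of $K$. Each $K_\chi$ lies in a cyclotomic field, hence is totally real or a CM field, so the $\chi$-term equals $[K_\chi:\mathbb{Q}]-1$ if $\chi$ is real-valued and $\tfrac12[K_\chi:\mathbb{Q}]-1$ otherwise; since $[K_\chi:\mathbb{Q}]$ is the length of the Galois orbit of $\chi$, summation gives
\[
\mathfrak{r}_{\mathbb{Z}}(G)=\tfrac12\bigl(k(G)+k_{\mathbb{R}}(G)\bigr)-s(G),
\]
where $k_{\mathbb{R}}(G)$ is the number of real-valued irreducible characters (equivalently, of real conjugacy classes) and $s(G)$ is the number of Galois orbits on $\mathrm{Irr}(G)$ (equivalently, on conjugacy classes); see Proposition \ref{bounded}. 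The upper bound now needs no hypothesis: for each orbit representative $\chi$ one has $r_1(K_\chi)+r_2(K_\chi)-1<r_1(K_\chi)+2r_2(K_\chi)=[K_\chi:\mathbb{Q}]$, and summing over orbit representatives yields $\mathfrak{r}_{\mathbb{Z}}(G)<\sum_\chi[K_\chi:\mathbb{Q}]=k(G)$.

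For the lower bound I would argue by the classification, aiming in each family for an \emph{explicit} lower bound on $\mathfrak{r}_{\mathbb{Z}}(G)$ that grows with $|G|$; the universal $c$ is then extracted at the end, after checking a finite list of small groups directly (all sporadic groups, the Tits group, and the groups of Lie type of bounded rank over small fields — for each of these it suffices that $\mathfrak{r}_{\mathbb{Z}}(G)\ge 2$, which holds by hypothesis). For $G=A_n$ every irreducible character is rational or lies in a Galois pair with quadratic character field coming from a self-conjugate partition; using the classical description of these fields, $\mathfrak{r}_{\mathbb{Z}}(A_n)$ equals — up to a negligible correction when the product of the diagonal hook lengths is a perfect square — the number of partitions of $n$ into distinct odd parts having an even number of parts congruent to $3$ modulo $4$. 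A generating-function estimate bounds this count below by $\exp(c_0\sqrt n)$ for all large $n$, while $k(A_n)\le p(n)=\exp(O(\sqrt n))$, so $\mathfrak{r}_{\mathbb{Z}}(A_n)\ge k(A_n)^{c}$ for a suitable $c>0$.

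For groups of Lie type the aim is to force $s(G)$ well below $\tfrac12 k(G)$, after which the displayed identity and $k_{\mathbb{R}}(G)\ge 0$ give $\mathfrak{r}_{\mathbb{Z}}(G)\ge\tfrac13 k(G)$ for $|G|$ large. When the rank is bounded I would exhibit a semisimple character $\chi$ attached to a regular element of a maximal torus of order roughly $q^{\mathrm{rank}}$ and show that $[\mathbb{Q}(\chi):\mathbb{Q}]$ exceeds a fixed positive power of $q$, so already $\tfrac12[\mathbb{Q}(\chi):\mathbb{Q}]-1$ is a positive power of $k(G)$. When the rank is unbounded I would parametrise the conjugacy classes via Jordan decomposition and count the Galois orbits on semisimple classes through the associated characteristic-polynomial data: the Galois action on irreducible polynomials of degree $d$ over $\mathbb{F}_q$ has at most $\tau(q^{d}-1)$ orbits, and assembling these over Jordan data shows $s(G)=q^{o(\mathrm{rank})}$, which is $o(k(G))$ since $k(G)$ is polynomial in $q$ of degree the rank.

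Finally, since by Theorem \ref{rank1} and the determination of the simple cut-groups in \cite{BCJM} only finitely many finite simple groups satisfy $\mathfrak{r}_{\mathbb{Z}}(G)\le 1$, the explicit bounds above also prove that $\mathfrak{r}_{\mathbb{Z}}(G)\to\infty$ as $|G|\to\infty$, which is the ``in particular'' assertion. The main obstacle is the uniform treatment of the groups of Lie type, and within it the classical groups of large rank over small fields, where $k(G)$ is largest: there one must genuinely control the average length of a Galois orbit on conjugacy classes through careful bookkeeping over Jordan decomposition, through the diagonal- and central-subtleties relating $\mathrm{SL}$, $\mathrm{GL}$ and $\mathrm{PSL}$ (which should cost only polynomially small factors), and through elementary estimates for $q^{d}-1$; in the bounded-rank case the delicate point is to certify that the character field of the chosen semisimple character is not collapsed too far by the relative Weyl group, so that an Euler-$\varphi$ type lower bound on its degree survives. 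The alternating and sporadic cases, and the final assembly of the constant from the finite list of exceptions, are by comparison routine.
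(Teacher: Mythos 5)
Your identity $\mathfrak{r}_{\mathbb{Z}}(G)=\tfrac12(k(G)+k_{\mathbb{R}}(G))-s(G)$ is exactly formula (\ref{E:1}) of the paper, and your derivation of the unconditional upper bound $\mathfrak{r}_{\mathbb{Z}}(G)<k(G)$ coincides with the paper's. Your treatment of $A_n$ (counting partitions into distinct odd parts with an even number of parts $\equiv 3\pmod 4$, giving an $e^{c\sqrt{n}}$-type bound) is also the same idea as Proposition~\ref{alternating}, though the paper gives an explicit construction rather than invoking generating functions. For bounded-rank Lie type your plan (exhibit a semisimple character attached to a regular element of a large maximal torus and lower-bound $[\mathbb{Q}(\chi):\mathbb{Q}]$) is the character-theoretic dual of what the paper does on the class side via Lemma~\ref{cyclic}: exhibit a large cyclic torus $H$ with $|N_G(H)/C_G(H)|$ small, giving $\mathfrak{r}_{\mathbb{Z}}(G)\geq\varphi(|H|)/(2|N_G(H)/C_G(H)|)-1$. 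That route is workable, though, as you note, one has to control the collapse by the relative Weyl group, which is precisely what Lemma~\ref{cyclic} handles uniformly.

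The genuine gap is in your unbounded-rank argument. You propose to show $s(G)=q^{o(\mathrm{rank})}$ and deduce $\mathfrak{r}_{\mathbb{Z}}(G)\geq\tfrac13 k(G)$ for $|G|$ large. Both claims are false. Already the data of the paper contradicts the tendency: in $\mathrm{Sp}(8,2)$ one has $k=81$ but $\mathfrak{r}_{\mathbb{Z}}=1$; in $\mathrm{Sp}(6,2)$ every class is rational so $s=k$ and $\mathfrak{r}_{\mathbb{Z}}=0$; the values of $f(\mathrm{Sp}(2n,2))$ grow only like $\varphi(2^n+1)/(2n)$, so the Galois-nontrivial part of the class set is a vanishing fraction of $k\asymp 2^n$, and the number of rational classes (hence $s$) stays comparable to $k$. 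More generally, a classical group over a fixed small field has an enormous supply of classes supported on bounded-order semisimple parts and on unipotent data (already $p(n)$ unipotent classes, and exponentially many semisimple classes with all eigenvalue orbits of bounded degree), and these contribute exponentially many orbits to $s(G)$; the average Galois-orbit length is nowhere near $q^{\mathrm{rank}(1-o(1))}$. Fortunately, the theorem only asserts $\mathfrak{r}_{\mathbb{Z}}(G)>k(G)^c$, and this is exactly what a single Singer-type torus delivers: $\varphi(|H|)$ is roughly $q^{\mathrm{rank}/2}$ while $|N_G(H)/C_G(H)|$ is at most linear in the rank, so Lemma~\ref{cyclic} gives $\mathfrak{r}_{\mathbb{Z}}(G)\gtrsim q^{c\,\mathrm{rank}}$ (Propositions~\ref{c2}, \ref{c3}, \ref{propo}, summarized as Proposition~\ref{lie}); combined with $k(G)\leq(6q)^{\mathrm{rank}}$ from \cite{LiebeckPyber}, this is the lower bound. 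You should replace the $s(G)$ counting by this torus argument, or by a careful character-side analogue; the $\mathfrak{r}_{\mathbb{Z}}\geq\tfrac13 k$ target should be abandoned.
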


We deduce Theorem \ref{rank1} from a more general result, Theorem \ref{mainmain}.

Let $G$ be a finite group and $e$ its exponent. Let $\epsilon$ be a primitive $e$-th root of unity. The Galois group $\mathcal{G} = \mathrm{Gal}(\mathbb{Q}[\epsilon]:\mathbb{Q})$, isomorphic to the unit group of $\mathbb{Z}/e\mathbb{Z}$, acts naturally on the set $\mathrm{Irr}(G)$ of complex irreducible characters of $G$ and on the set $\mathcal{C}$ of conjugacy classes of $G$. Since for every element $\alpha$ in $\mathcal{G}$ the number of fixed points of $\alpha$ on $\mathcal{C}$
and on $\mathrm{Irr}(G)$ coincide, the number of $\mathcal{G}$-orbits on $\mathcal{C}$ and on $\mathrm{Irr}(G)$ are the same. We keep the notation $\mathcal{G}$ and $\mathcal{C}$ throughout this paper.

For a finite group $G$ let $f(G)$ denote the maximal length of an orbit of $\mathcal{G}$ on $\mathcal{C}$. The group $G$ is rational if and only if $f(G) =1$. Another goal of the present paper is to classify finite simple groups $G$ for which $f(G) \leq 4$.

Feit and Seitz \cite{FeitSeitz} proved that the only rational non-abelian finite simple groups are $\mathrm{Sp}(6,2)$ and $\mathrm{SO^{+}}(8,2)$. In \cite{AD} a list of groups for which $f(G) \leq 2$ was given. Since this list contains errors, we reproduce it in (i) and (ii) in the result below. (Note that the groups $\tD{2}$,\;  $\tD{3}$,\;  $\tB{8}$, \; $\tB{32}$, \;  $\tG{27}$, \; $\tF{2}'$ appear incorrectly in \cite[Table 1]{AD} and $\mathrm{G}_{4}(2)$ is not displayed.)

\begin{theorem}\label{mainmain}
Let $G$ be a finite simple group.
\begin{enumerate}
\item[(i)] $f(G) = 1$ if and only if $G \in \{\; C_{2}, \; \mathrm{Sp}(6,2), \; \mathrm{SO^{+}}(8,2)\; \}$.

\item[(ii)] $f(G) = 2$ if and only if $G = A_{n}$ for $n \geq 5$ or $G \in  \{ C_{3}, \; \mathrm{PSL(2,7)}, \;  \mathrm{PSL}(2,11), \\ \mathrm{PSL}(3,4), \; \mathrm{PSp}(6,3), \; \mathrm{Sp}(8,2), \; \mathrm{SU}(3,3),\;  \mathrm{PSU}(3,5), \; \mathrm{SU}(4,2) \cong \mathrm{PSp}(4,3), \\ \mathrm{PSU}(4,3),\;  \mathrm{SU}(5,2),\;  \mathrm{PSU}(6,2), \; \mathrm{P\Omega}(7,3), \; \mathrm{P\Omega}^{+}(8,3), \; \mathrm{F}_{4}(2), \;  \mathrm{G}_{2}(3), \;  \mathrm{G}_{2}(4), \\ \tE{2} \}$ or $G$ is one of twenty sporadic simple groups different from $\ly$ and different from the five sporadic groups listed in (iii).
	
\item[(iii)] $f(G) = 3$ if and only if $G \in \{\; \mathrm{SL}(2,8), \;  \mathrm{PSL}(2,13), \; \mathrm{PSL}(2,17), \; \mathrm{PSL}(2,19), \\ \mathrm{Sp}(10,2), \; \mathrm{PSp}(4,5), \; \tD{2}, \; \tB{8} \}$ or $G \in \{ \ja{1}, \; \ja{3}, \; \ja{4}, \; \ON, \; \ru \;\}$.

\item[(iv)] $f(G) = 4$ if and only if $G \in \{\; C_{5}, \mathrm{PSL}(2,29), \mathrm{PSL}(2,31), \mathrm{SL}(3,3), \mathrm{PSL}(4,3), \\ \mathrm{Sp}(12,2), \mathrm{Sp}(4,4), \mathrm{SU}(3,4), \mathrm{\Omega}^{-}(8,2), \mathrm{\Omega}^{-}(10,2), \mathrm{\Omega}^{+}(12,2), \tF{2}'\; \}$.
\end{enumerate}
\end{theorem}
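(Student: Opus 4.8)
The plan is to use the classification of finite simple groups and to handle the alternating groups, the sporadic groups, and the groups of Lie type in turn, reducing each family to finitely many candidates on which $f$ is then determined. The only tool needed throughout is the following elementary observation: for $g \in G$ of order $n$ the $\mathcal{G}$-orbit of the class of $g$ is $\{\,[g^{k}] : \gcd(k,n)=1\,\}$, so its length equals $\varphi(n)/\bigl|N_{G}(\langle g\rangle):C_{G}(g)\bigr|$, since $N_{G}(\langle g\rangle)/C_{G}(g)$ acts faithfully on $\langle g\rangle\cong\mathbb{Z}/n\mathbb{Z}$ with image exactly the set of $k$ for which $g^{k}$ is conjugate to $g$. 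The abelian case is then immediate from $f(C_{p})=p-1$ (the Galois group acts transitively on the $p-1$ faithful characters), which produces $C_{2}$, $C_{3}$, $C_{5}$ with $f=1,2,4$; and for part (i) it suffices to recall that $f(G)=1$ means precisely that $G$ is rational, so that the only non-abelian possibilities are $\mathrm{Sp}(6,2)$ and $\mathrm{SO^{+}}(8,2)$ by the theorem of Feit and Seitz \cite{FeitSeitz}.

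For the alternating groups I would argue uniformly. Any power of $g\in S_{n}$ generating the same cyclic subgroup has the same cycle type as $g$, so $S_{n}$ is rational; hence each $S_{n}$-class contained in $A_{n}$ either stays a single $A_{n}$-class or splits into two, and in both cases the $\mathcal{G}$-orbit of an $A_{n}$-class lies inside the (at most two) $A_{n}$-classes fusing in $S_{n}$. Thus $f(A_{n})\le 2$, while $f(A_{n})\ge 2$ for $n\ge 5$ because no alternating group appears on the Feit--Seitz list; therefore $f(A_{n})=2$ for every $n\ge 5$, which explains the alternating entry in (ii) and the absence of alternating groups from (iii) and (iv). For the sporadic groups one evaluates $f$ directly on each of the $26$ character tables, using the stored power maps and character fields; this places every sporadic group in exactly one case and in particular shows $f(\ly)>4$, so that $\ly$ occurs in none of the four lists.

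The groups of Lie type form the main part, and the reduction to finitely many of them is where I expect the real difficulty to lie. The idea is to show that if $G=X(q)$ is a simple group of Lie type over $\mathbb{F}_{q}$ of untwisted rank $r$, then whenever $q$ is large --- and, for the classical series, also whenever $r$ is large --- the group $G$ contains a ``Singer-type'' cyclic subgroup $C$, a cyclic subgroup of a maximal torus whose order is divisible by a suitably chosen cyclotomic value $\Phi_{d}(q)$, for which $\bigl|N_{G}(C):C_{G}(C)\bigr|$ is bounded above by a small explicit quantity --- essentially the order of the relevant relative Weyl group, hence by $d$ or by $|W|$. By Zsigmondy's theorem $|C|$ then has a large primitive prime divisor, and a standard lower bound on Euler's function forces $\varphi(|C|)$ to exceed four times that quantity, so that a generator of $C$ has a $\mathcal{G}$-orbit of length greater than $4$ and $f(G)>4$. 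Carrying this out uniformly over all twisted and untwisted types --- choosing $d$ and controlling the index $\bigl|N_{G}(C):C_{G}(C)\bigr|$ simultaneously in $q$ and in the rank for the families $\mathrm{PSL}$, $\mathrm{PSU}$, $\mathrm{PSp}$, $\mathrm{P\Omega}^{\pm}$, and dealing with the bounded exceptional ranks explicitly --- is the technical heart of the argument.

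Finally, for the finitely many pairs $(X,q)$ that survive the reduction, each is settled individually. For the groups with an explicitly known ordinary character table --- among them $\mathrm{F}_{4}(2)$, $\tE{2}$, $\fis{22}$, $\tF{2}'$, and the small orthogonal and symplectic groups over $\mathbb{F}_{2}$ --- one reads off $f(G)$ directly, and for the remaining members of the generic families over very small fields one uses the parametrization of semisimple and unipotent classes together with the known description of the character fields to compute the $\mathcal{G}$-action. Collecting the alternating, sporadic, and Lie-type outputs yields the four lists, and along the way one re-derives and, at the places indicated after the statement, corrects the $f(G)\le 2$ list of \cite{AD}. The two points I expect to be delicate are the uniform Singer-element reduction for the classical groups of unbounded rank and the bookkeeping of the Galois action on the Lusztig series for the borderline Lie-type groups --- precisely the spot where \cite{AD} went astray.
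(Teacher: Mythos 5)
Your outline mirrors the paper's approach: the same CFSG case split, the same orbit-length tool (Lemma~\ref{cyclic}, which bounds the Galois orbit of $[g]$ from below by $\varphi(|g|)/\bigl|N_{G}(\langle g\rangle):C_{G}(\langle g\rangle)\bigr|$), the same reduction of the Lie-type families via Singer-type cyclic tori with bounded normalizer quotients (Propositions~\ref{c1}--\ref{c3} and~\ref{propo}, drawing on \cite{Huppert} and \cite{Babai_Palfy_Saxl}), and the same final case-by-case check of the surviving candidates against character tables. The only differences are cosmetic: you invoke Zsigmondy where the paper uses the elementary bound $\varphi(m)\geq\sqrt{m}/2$, and for the few borderline groups without library character tables (Proposition~\ref{individual}) the paper counts distinct characteristic polynomials of a pseudorandom element rather than analysing Lusztig series.
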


Our strategy for classifying the finite simple groups $G$ with a given value $f(G)$ is to use upper bounds for certain parameters to get a list of candidates, and to check each of these candidates explicitly. Thus this last step is the problem to determine simple groups $G$ with given value $f(G)$.

In Section 2 we explain the technical background of the paper. Proposition \ref{bounded} is a generalization of an old result of A. Bovdi. These ideas are used to deduce a key tool of the work, Lemma \ref{cyclic}. Section 3 introduces two functions with which computations are performed using known character tables of finite simple groups. These results as well as Proposition \ref{individual} are later used in Sections 5 and 6. After a short section on reductions, classical simple groups are treated in Section 5 and exceptional simple groups of Lie type in Section 6. Section 7 is on alternating groups. In Section 8 we establish Theorem \ref{secondmain}.

\section{Background}

In this section we present some background concerning the invariant $\mathfrak{r}_{\mathbb{Z}}$. In particular, we prove Proposition \ref{bounded} which is a generalization of \cite[Lemma 8.1, p.\;81]{Bovdi_book}.


The following formula was well-known in the 1970's, see Patay \cite{Patay_I}.

\begin{equation}\label{E:1}
\mathfrak{r}_{\mathbb{Z}} = h_{\mathbb{R}} +\textstyle \frac{1}{2} h_{\mathbb{C}} - n_{G}.
\end{equation}
Here $h_{\mathbb{R}}$ denotes the number of real-valued complex irreducible characters of $G$,\quad  $h_{\mathbb{C}} = |\mathrm{Irr}(G)| - h_{\mathbb{R}}$, and $n_{G}$ is the number of $\mathcal{G}$-orbits on $\mathcal{C}$ and on $\mathrm{Irr}(G)$.

For $a \in G$ let $|a|$ denote the order of $a$ and let $a^{G}$ be the conjugacy class of $a$. The $\mathbb{Q}$-class $\{a^G\}_\mathbb{Q}$ of $a$ is defined to be the set \; $\cup_{s=1,\; (s, |G|)=1}^{|a|-1} (a^s)^G$.\; The number $n_{\mathbb{Q}}$ of $\mathbb{Q}$-classes in $G$ is equal to $n_{G}$. The $\mathbb{R}$-class $\{a^G\}_\mathbb{R}$ of $a\in G$ is defined to be $a^G \cup (a^{-1})^G$. Let $n_\mathbb{R}$ be the number of $\mathbb{R}$-classes of $G$.

From these we obtain the following.

\begin{proposition}\label{Prop:2.1}
If $G$ is a finite group, then $\mathfrak{r}_{\mathbb{Z}} = n_{\mathbb{R}} - n_{\mathbb{Q}}$.
\end{proposition}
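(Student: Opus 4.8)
The plan is to read off the claim from the Patay formula \eqref{E:1} together with the fixed-point identity for the $\mathcal{G}$-action quoted just above. By \eqref{E:1} we have $\mathfrak{r}_{\mathbb{Z}} = h_{\mathbb{R}} + \tfrac12 h_{\mathbb{C}} - n_G$, and $n_G = n_{\mathbb{Q}}$ holds by the very definition of $n_{\mathbb{Q}}$ (the $\mathcal{G}$-orbits on $\mathcal{C}$ are exactly the $\mathbb{Q}$-classes). Hence the whole content of the proposition reduces to the identity $n_{\mathbb{R}} = h_{\mathbb{R}} + \tfrac12 h_{\mathbb{C}}$.

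To establish that identity I would view $n_{\mathbb{R}}$ as the number of orbits of the order-two group generated by the involution $\iota\colon a^{G} \mapsto (a^{-1})^{G}$ on the set $\mathcal{C}$, so that Burnside's orbit-counting formula gives $n_{\mathbb{R}} = \tfrac12\bigl(|\mathcal{C}| + |\mathrm{Fix}(\iota)|\bigr)$. The fixed points of $\iota$ on $\mathcal{C}$ are precisely the real (ambivalent) classes; these are the fixed points of the element $-1$ of $\mathcal{G}$ (complex conjugation) acting on $\mathcal{C}$, and by the identity recalled in the excerpt their number equals the number of fixed points of $-1$ on $\mathrm{Irr}(G)$, i.e.\ the number $h_{\mathbb{R}}$ of real-valued irreducible characters. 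Since $|\mathcal{C}| = |\mathrm{Irr}(G)| = h_{\mathbb{R}} + h_{\mathbb{C}}$, this yields $n_{\mathbb{R}} = \tfrac12\bigl(h_{\mathbb{R}} + h_{\mathbb{C}} + h_{\mathbb{R}}\bigr) = h_{\mathbb{R}} + \tfrac12 h_{\mathbb{C}}$, and combining with the first paragraph gives $\mathfrak{r}_{\mathbb{Z}} = n_{\mathbb{R}} - n_{\mathbb{Q}}$.

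There is essentially no obstacle here: the statement is a bookkeeping rearrangement of \eqref{E:1}, and the only external input is the $\alpha = -1$ instance of the fixed-point identity already quoted. The one point worth noting is that $h_{\mathbb{C}}$ is even, so that $\tfrac12 h_{\mathbb{C}}$ is an integer and all the arithmetic takes place over $\mathbb{Z}$; this is automatic because non-real irreducible characters (equivalently, non-real classes) occur in conjugate pairs, which is also precisely what makes the Burnside count an integer.
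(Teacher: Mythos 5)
Your proof is correct and is essentially the paper's own argument: the paper likewise reduces, via \eqref{E:1} and $n_G = n_{\mathbb{Q}}$, to the identity $n_{\mathbb{R}} = \tfrac12\bigl(|\mathrm{Irr}(G)| + h_{\mathbb{R}}\bigr)$, and justifies it by the observation that $h_{\mathbb{R}}$ equals the number of real conjugacy classes (citing \cite[Theorem 11.9]{bible}, which is the $\alpha=-1$ case of the fixed-point identity you invoke). Framing the count of $\mathbb{R}$-classes as a Burnside count for the order-two group $\langle\iota\rangle$ is only a cosmetic repackaging of the paper's direct tally of real versus paired non-real classes.
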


\begin{proof}
It is sufficient to see by \eqref{E:1} that $\frac{1}{2}\big(|\mathrm{Irr}(G)| + h_{\mathbb{R}}\big) = n_{\mathbb{R}}$. This is clear since $|\mathrm{Irr}(G)| = |\mathcal{C}|$ and $h_{\mathbb{R}}$ is equal to the number of real conjugacy classes in $G$ by \cite[Theorem 11.9, p. 265]{bible}.
\end{proof}

We emphasize that Proposition \ref{Prop:2.1} was well known for decades.

Let $\mathcal{A}$ be the set of conjugacy classes $g^{G}$ of $G$ such that some generator of $\langle g \rangle$ is conjugate in $G$ neither to $g$ nor to $g^{-1}$. The group $\mathcal{G}$ acts on $\mathcal{A}$ in a natural way according to its action on $\mathcal{C}$. Let $a_{2}$ denote the number of orbits in this specified action. Let $\sigma \in \mathcal{G}$ be complex conjugation. It acts as inversion on $\mathcal{C}$ (which action may be trivial). Let $a_{1}$ be the number of orbits of $\langle \sigma \rangle$ on $\mathcal{A}$.
Analogously, let $\mathcal{B}$ be the set of complex irreducible characters $\chi$ of $G$ such that $\mathbb{Q}(\chi)$ is neither rational nor imaginary quadratic. The group $\mathcal{G}$ acts on $\mathcal{B}$ in a natural way. Let the number of orbits of $\mathcal{G}$ in this specified action be $b_{2}$. The element $\sigma$ in $\mathcal{G}$ acts as complex conjugation on $\mathrm{Irr}(G)$. Let the number of orbits of $\langle \sigma \rangle$ on $\mathcal{B}$ be denoted by $b_{1}$.

The following is a generalization of the result of A. Bovdi \cite[Lemma 8.1, p.\;81]{Bovdi_book} already mentioned in the Introduction.

\begin{proposition}
\label{bounded}
Let $G$ be a finite group. Let $a_1$, $a_2$, $b_1$, $b_2$ be as above. Then $2a_{2} \leq a_{1}$ and $2b_{2} \leq b_{1}$. Moreover, $\mathfrak{r}_{\mathbb{Z}}(G) = a_{1}-a_{2} = b_{1}-b_{2}$.


\end{proposition}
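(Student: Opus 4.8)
The plan is to derive everything from Proposition~\ref{Prop:2.1} together with an elementary bookkeeping comparison between the $\mathcal{G}$-orbits and the $\langle\sigma\rangle$-orbits. First I would record the two ways of counting orbits on $\mathcal{C}$: the number of $\mathcal{G}$-orbits on $\mathcal{C}$ is $n_{\mathbb{Q}}$, while the number of $\langle\sigma\rangle$-orbits on $\mathcal{C}$ is $n_{\mathbb{R}}$, since $\sigma$ acts by inversion and its orbit on $g^G$ is exactly the $\mathbb{R}$-class $\{a^G\}_{\mathbb{R}}=g^G\cup (g^{-1})^G$. Next I would observe that $\mathcal{A}$ is $\mathcal{G}$-invariant, hence $\langle\sigma\rangle$-invariant, and reformulate its defining condition: a class $g^G$ lies in $\mathcal{A}$ precisely when its $\mathcal{G}$-orbit is \emph{not} contained in its $\langle\sigma\rangle$-orbit $\{g^G,(g^{-1})^G\}$, equivalently, when the $\mathcal{G}$-orbit of $g^G$ is a union of at least two $\langle\sigma\rangle$-orbits.

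Write $\mathcal{A}' = \mathcal{C}\setminus\mathcal{A}$. On $\mathcal{A}'$ every $\mathcal{G}$-orbit is a single $\langle\sigma\rangle$-orbit, so the number $m$ of $\mathcal{G}$-orbits on $\mathcal{A}'$ equals the number of $\langle\sigma\rangle$-orbits on $\mathcal{A}'$. Counting separately on $\mathcal{A}$ and on $\mathcal{A}'$ then gives $n_{\mathbb{Q}} = m + a_2$ and $n_{\mathbb{R}} = m + a_1$, whence $\mathfrak{r}_{\mathbb{Z}}(G) = n_{\mathbb{R}} - n_{\mathbb{Q}} = a_1 - a_2$ by Proposition~\ref{Prop:2.1}. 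For the inequality $2a_2 \le a_1$, note that each $\mathcal{G}$-orbit contained in $\mathcal{A}$ is a union of at least two $\langle\sigma\rangle$-orbits: otherwise it would consist of a single $\langle\sigma\rangle$-orbit, i.e.\ it would be $\{g^G\}$ or $\{g^G,(g^{-1})^G\}$, forcing every generator of $\langle g\rangle$ to be conjugate in $G$ to $g$ or to $g^{-1}$, contrary to $g^G\in\mathcal{A}$. Summing over the $a_2$ orbits of $\mathcal{G}$ on $\mathcal{A}$ yields $a_1\ge 2a_2$.

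For the character side I would first bring the definition of $\mathcal{B}$ into the same shape. Since $\mathbb{Q}(\chi)$ is the fixed field inside $\mathbb{Q}[\epsilon]$ of the stabiliser of $\chi$ in $\mathcal{G}$ — an element of $\mathcal{G}$ fixes $\chi$ as a character if and only if it fixes every value $\chi(g)$ — Galois theory gives $[\mathbb{Q}(\chi):\mathbb{Q}] = |\mathcal{G}\cdot\chi|$, and $\mathbb{Q}(\chi)\subseteq\mathbb{R}$ if and only if $\sigma$ fixes $\chi$, i.e.\ $\chi=\overline{\chi}$. Hence $\mathbb{Q}(\chi)$ is rational or imaginary quadratic exactly when the $\mathcal{G}$-orbit of $\chi$ is contained in $\{\chi,\overline{\chi}\}$, so $\mathcal{B}$ is the set of $\chi$ whose $\mathcal{G}$-orbit is a union of at least two $\langle\sigma\rangle$-orbits — the exact analogue of $\mathcal{A}$. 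Using that the number of $\mathcal{G}$-orbits on $\mathrm{Irr}(G)$ equals $n_{\mathbb{Q}}$ and that the number of $\langle\sigma\rangle$-orbits on $\mathrm{Irr}(G)$ equals $\tfrac12\bigl(|\mathrm{Irr}(G)|+h_{\mathbb{R}}\bigr) = n_{\mathbb{R}}$ (the identity established in the proof of Proposition~\ref{Prop:2.1}), the same counting applied to $\mathcal{B}$ and its complement gives $\mathfrak{r}_{\mathbb{Z}}(G) = b_1 - b_2$ and $2b_2\le b_1$.

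I do not expect a genuine obstacle; the content lies entirely in setting up the two dictionaries — ``$g^G\in\mathcal{A}$'' $\Leftrightarrow$ ``the $\mathcal{G}$-orbit of $g^G$ splits into at least two inversion-orbits'', and its character analogue via the identification of $\mathbb{Q}(\chi)$ with a fixed field — and in the observation that on the complements $\mathcal{A}'$ and $\mathcal{B}'$ the $\mathcal{G}$-orbits and the $\langle\sigma\rangle$-orbits literally coincide, so their contributions to $n_{\mathbb{Q}}$ and to $n_{\mathbb{R}}$ are equal and cancel in the difference $\mathfrak{r}_{\mathbb{Z}} = n_{\mathbb{R}} - n_{\mathbb{Q}}$. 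The only point needing a little care is the degenerate case in which $\sigma$ acts trivially (so $\langle\sigma\rangle$ is trivial and all its orbits are singletons); the arguments above still go through verbatim, because a $\mathcal{G}$-orbit inside $\mathcal{A}$ (respectively $\mathcal{B}$) still has size at least $2$ and hence still splits into at least two $\langle\sigma\rangle$-orbits.
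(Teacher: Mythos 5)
Your proposal is correct and follows essentially the same route as the paper: both reduce to the orbit-counting identity $\mathfrak{r}_{\mathbb{Z}}(G) = |\mathcal{C}/\langle\sigma\rangle| - |\mathcal{C}/\mathcal{G}|$ (and its $\mathrm{Irr}(G)$-side counterpart via Brauer's permutation lemma), restrict to $\mathcal{A}$ (resp.\ $\mathcal{B}$) after noting that on the complement the two orbit decompositions coincide, and obtain the inequality from the fact that each $\mathcal{G}$-orbit inside $\mathcal{A}$ (resp.\ $\mathcal{B}$) consists of at least two $\langle\sigma\rangle$-orbits. You spell out a few points the paper leaves implicit — the reformulation of the paper's definitions of $\mathcal{A}$ and $\mathcal{B}$ as ``$c^{\mathcal{G}}\neq c^{\langle\sigma\rangle}$'' via generators of $\langle g\rangle$ and the fixed-field description of $\mathbb{Q}(\chi)$, and the bookkeeping $n_{\mathbb{Q}}=m+a_2$, $n_{\mathbb{R}}=m+a_1$ that makes the ``immediately get'' step transparent — but these are elaborations, not a different argument.
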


\begin{proof}
Let $\mathcal{G}$ denote the Galois group that acts on the sets $\mathcal{C}$ and $\mathrm{Irr}(G)$
    of conjugacy classes and irreducible characters of $G$. Let $\sigma \in \mathcal{G}$ be complex conjugation on $\mathrm{Irr}(G)$. It acts as inversion on $\mathcal{C}$. Let $r$ denote the rank of $R(G)$.
    Then Proposition \ref{Prop:2.1} says $r = |\mathcal{C}/ \langle \sigma \rangle| - |\mathcal{C}/\mathcal{G}|$,
    the difference of orbit numbers of $\langle \sigma \rangle$ and $\mathcal{G}$ on $\mathcal{C}$, respectively.

    By Brauer's Permutation Lemma and the Orbit Counting Lemma (see \cite[Corollary 11.10]{bible}),
    we have $|\mathcal{C}/\langle \sigma \rangle| = |\mathrm{Irr}(G)/\langle \sigma \rangle|$ and $|\mathcal{C}/\mathcal{G}| = |\mathrm{Irr}(G)/\mathcal{G}|$ and therefore $r = |\mathrm{Irr}(G)/\langle \sigma \rangle| - |\mathrm{Irr}(G)/\mathcal{G}|$.

    Now we can consider the Galois action on the set
    $\mathcal{A} = \{ c \in \mathcal{C} \ : \ c^{\langle \sigma \rangle} \not= c^{\mathcal{G}} \}$,
    and immediately get $r = |\mathcal{A}/\langle \sigma \rangle| - |\mathcal{A}/\mathcal{G}|$.
    Analogously, the $\mathcal{G}$-action on the set
    $\mathcal{B} = \{ \chi \in \mathrm{Irr}(G) \ : \ \chi^{\langle \sigma \rangle} \not= \chi^{\mathcal{G}} \}$ yields
    $r = |\mathcal{B}/\langle \sigma \rangle| - |\mathcal{B}/\mathcal{G}|$.

		Since $\mathcal{G}$ joins at least two $\langle \sigma \rangle$-orbits on $\mathcal{A}$ and $\mathcal{B}$, respectively,
    we also have $|\mathcal{A}/\langle \sigma \rangle| \geq 2 |\mathcal{A}/\mathcal{G}|$ and $|\mathcal{B}/\langle \sigma \rangle| \geq 2 |\mathcal{B}/\mathcal{G}|$.
\end{proof}

Observe that $\mathfrak{r}_{\mathbb{Z}}(G) = \sum_C \left( |C/\langle \sigma \rangle| - 1 \right)$, where the summation runs over the families $C$ of algebraic conjugate classes of $G$ (and where $|C/\mathcal{G}| = 1$ holds). This follows by applying the idea of Proposition \ref{bounded} to single families. One gets $\mathfrak{r}_{\mathbb{Z}}(G) \geq |C/\langle \sigma \rangle| - 1$ for each family. In particular, $\mathfrak{r}_{\mathbb{Z}}(G) \geq f(G)/2 - 1$.

We denote Euler's totient function by $\varphi$. For a positive integer $m = p_{1}^{k_1} \cdots p_{t}^{k_{t}}$ where $p_{1}, \ldots , p_{t}$ are distinct primes and $k_{1}, \ldots , k_{t}$ are positive integers the value of $\varphi(m)$ is $\prod_{i=1}^{t} ( p_{i}^{k_{i}} - p_{i}^{k_{i}-1} )$. It is easy to see that $\varphi(m) \geq \sqrt{m}/2$.

\begin{lemma}
\label{cyclic}
Let $H$ be a cyclic subgroup in a finite group $G$. If $n = |N_G(H)/C_G(H)|$, then
\begin{enumerate}
\item[(i)] $f(G) \geq \varphi(|H|)/n \geq \sqrt{|H|}/(2 n)$ and

\item[(ii)] $\mathfrak{r}_{\mathbb{Z}}(G) \geq (\varphi(|H|)/(2n)) - 1
\geq (\sqrt{|H|}/(4n)) - 1$.
\end{enumerate}
In particular, if $f(G) \leq 4$, then $\varphi(|H|) \leq 4 n$.
\end{lemma}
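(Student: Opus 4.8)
The plan is to fix a generator $h$ of $H$ and to show that the single $\mathcal{G}$-orbit of the class $h^{G}$ already has at least $\varphi(|H|)/n$ elements; parts (i) and (ii) then follow from the estimates recorded just before the statement. Write $e$ for the exponent of $G$, so that $|H|$ divides $e$ and $\mathcal{G} \cong (\mathbb{Z}/e\mathbb{Z})^{\times}$. The element of $\mathcal{G}$ corresponding to a residue $s$ coprime to $e$ sends $h^{G}$ to $(h^{s})^{G}$; since $h^{s}$ depends only on $s$ modulo $|H|$, and since by the Chinese Remainder Theorem every residue coprime to $|H|$ is congruent modulo $|H|$ to some residue coprime to $e$, the $\mathcal{G}$-orbit of $h^{G}$ is exactly the set of conjugacy classes $\{\, (h^{s})^{G} : \gcd(s,|H|) = 1 \,\}$ of the generators of $H$.

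Next I would count those classes. If two generators $h^{s}$ and $h^{t}$ of $H$ are conjugate in $G$, say $g h^{s} g^{-1} = h^{t}$, then $g \langle h^{s}\rangle g^{-1} = \langle h^{t}\rangle$; but $\langle h^{s}\rangle = \langle h^{t}\rangle = H$ because $h^{s}$ and $h^{t}$ generate $H$, so $g \in N_{G}(H)$. Hence the distinct classes among the $(h^{s})^{G}$ are in bijection with the orbits of $N_{G}(H)$, acting by conjugation, on the set of $\varphi(|H|)$ generators of $H$. This action factors through $N_{G}(H)/C_{G}(H)$, a group of order $n$, so each orbit has size at most $n$ and there are at least $\varphi(|H|)/n$ of them. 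Since $f(G)$ is by definition the maximal length of a $\mathcal{G}$-orbit on $\mathcal{C}$, this yields $f(G) \geq \varphi(|H|)/n$, and combining with $\varphi(|H|) \geq \sqrt{|H|}/2$ gives part (i).

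For part (ii), the family $C$ of algebraically conjugate classes containing $h^{G}$ is precisely the $\mathcal{G}$-orbit just found, so $|C| \geq \varphi(|H|)/n$; since complex conjugation $\sigma$ acts on $\mathcal{C}$ as an involution, every $\langle\sigma\rangle$-orbit on $C$ has at most two elements, whence $|C/\langle\sigma\rangle| \geq |C|/2 \geq \varphi(|H|)/(2n)$. The inequality $\mathfrak{r}_{\mathbb{Z}}(G) \geq |C/\langle\sigma\rangle| - 1$ observed after Proposition \ref{bounded} then gives $\mathfrak{r}_{\mathbb{Z}}(G) \geq \varphi(|H|)/(2n) - 1 \geq \sqrt{|H|}/(4n) - 1$. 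Finally, if $f(G) \leq 4$ then $\varphi(|H|)/n \leq f(G) \leq 4$, i.e.\ $\varphi(|H|) \leq 4n$. There is no deep obstacle here: the two points requiring care are that the Galois orbit of $h^{G}$ genuinely exhausts all conjugacy classes of generators of $H$ — which is why the reduction of residues modulo $|H|$ versus modulo $e$ must be handled carefully — and the elementary but essential observation that a $G$-conjugacy between two generators of $H$ is automatically realized inside $N_{G}(H)$.
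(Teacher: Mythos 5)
Your proof is correct and follows essentially the same approach as the paper's, which merely cites the two preceding paragraphs and asserts $\tfrac{1}{2}|C|-1 = \tfrac{1}{2n}\varphi(|H|)-1$ without elaboration; you have filled in the two key details the paper leaves implicit, namely that the Galois orbit of $h^{G}$ is exactly the set of conjugacy classes of generators of $H$ (handling the reduction from residues modulo the exponent to residues modulo $|H|$), and that two generators of $H$ conjugate in $G$ are already conjugate in $N_{G}(H)$, so the number of such classes is bounded below by $\varphi(|H|)/n$.
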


\begin{proof}
Let $h \in G$ be an element of an element of a family $C$ of algebraic conjugate classes in $G$, with $n = |N_G(H)/C_G(H)|$ where $H = \langle h \rangle$. Statement (i) follows from the paragraph preceding Lemma \ref{cyclic}. In particular, if $f(G) \leq 4$, then $\varphi(|H|) \leq 4 n$. Finally, from the two paragraphs preceding Lemma \ref{cyclic}, we get
\[
\mathfrak{r}_{\mathbb{Z}}(G) \geq \textstyle\frac{1}{2}|C| - 1 = \frac{1}{2n}\varphi(|H|) - 1 \geq \frac{1}{4n}\sqrt{|H|}- 1,
\]
giving statement (ii).
\end{proof}

\section{Computational results}
\label{computational}

In this section we collect the computational results needed to prove Theorems \ref{rank1} and \ref{mainmain}. We use the computer system \cite{GAP}. Such computations are shown already in \cite{Aleev_dissertation}.

We present a function for computing $\mathfrak{r}_{\mathbb{Z}}(G)$ for a finite group $G$. When applied to the character table of the group $G$, say, it returns the value $h_{\mathbb{R}} + (|\mathrm{Irr}(G) - h_{\mathbb{R}})/2 - n_{\mathbb{Q}}$.

\begin{lstlisting}[language=GAP]
RankOfCentralUnits:= function( tbl )
    local X, real;
    X:= Irr( tbl );
    real:= Number( X, chi -> chi = ComplexConjugate( chi ) );
    return real + ( Length( X ) - real ) / 2 - Length( RationalizedMat( X ) );
end;;
\end{lstlisting}

We use this function to determine all non-abelian finite simple groups $G$ in the character table library \cite{CTblLib} with $\mathfrak{r}_{\mathbb{Z}}(G)$ equal to $0$ or $1$.

\begin{verbatim}
gap> AllCharacterTableNames( IsSimple, true, IsAbelian, false,
>         IsDuplicateTable, false, RankOfCentralUnits, 0 );
[ "A12", "A7", "A8", "A9", "Co1", "Co2", "Co3", "HS", "L3(2)", "M", "M11",
  "M12", "M22", "M23", "M24", "McL", "O8+(2)", "S6(2)", "Th", "U3(3)",
  "U3(5)", "U4(2)", "U4(3)", "U5(2)", "U6(2)" ]
gap> AllCharacterTableNames( IsSimple, true, IsAbelian, false,
>         IsDuplicateTable, false, RankOfCentralUnits, 1 );
[ "2E6(2)", "A10", "A11", "A13", "A16", "A17", "A5", "A6", "F4(2)", "Fi22",
  "G2(3)", "L2(11)", "L3(3)", "L3(4)", "L4(3)", "O7(3)", "O8+(3)", "S6(3)",
  "S8(2)" ]
\end{verbatim}

We now present a function for computing the maximal length of Galois orbits on the set of conjugacy classes of a finite group. When applied to the character table of the group $G$, say, it returns $1$ if all conjugacy classes of $G$ are rational, and the maximum of the lengths of families of algebraic conjugates of classes of $G$ otherwise.

\begin{lstlisting}[language=GAP]
MaximalGaloisOrbitLength:= function( tbl )
local fams;
fams:= Filtered( GaloisMat( TransposedMat( Irr( tbl ) ) ).galoisfams, IsList );
return MaximumList( List( fams, l -> Length( l[1] ) ), 1 );
end;;
\end{lstlisting}

We use this function to determine all non-abelian finite simple groups $G$ in the character table library \cite{CTblLib} with maximal Galois orbit length at most $4$ on the set of conjugacy classes of $G$. The computations shown below/above take altogether only a few seconds; note that they just evaluate known character tables.

\begin{verbatim}
gap> AllCharacterTableNames( IsSimple, true, IsAbelian, false,
> IsDuplicateTable, false, MaximalGaloisOrbitLength, x -> x = 1 );
[ "O8+(2)", "S6(2)" ]
gap> AllCharacterTableNames( IsSimple, true, IsAbelian, false,
> IsDuplicateTable, false, MaximalGaloisOrbitLength, x -> x = 2 );
[ "2E6(2)", "A10", "A11", "A12", "A13", "A14", "A15", "A16", "A17", "A18",
  "A19", "A5", "A6", "A7", "A8", "A9", "B", "Co1", "Co2", "Co3", "F3+",
  "F4(2)", "Fi22", "Fi23", "G2(3)", "G2(4)", "HN", "HS", "He", "J2",
  "L2(11)", "L3(2)", "L3(4)", "M", "M11", "M12", "M22", "M23", "M24", "McL",
  "O7(3)", "O8+(3)", "S6(3)", "S8(2)", "Suz", "Th", "U3(3)", "U3(5)",
  "U4(2)", "U4(3)", "U5(2)", "U6(2)" ]
gap> AllCharacterTableNames( IsSimple, true, IsAbelian, false,
> IsDuplicateTable, false, MaximalGaloisOrbitLength, x -> x = 3 );
[ "3D4(2)", "J1", "J3", "J4", "L2(13)", "L2(17)", "L2(19)", "L2(8)", "ON",
  "Ru", "S10(2)", "S4(5)", "Sz(8)" ]
gap> AllCharacterTableNames( IsSimple, true, IsAbelian, false,
> IsDuplicateTable, false, MaximalGaloisOrbitLength, x -> x = 4 );
[ "2F4(2)'", "L2(29)", "L2(31)", "L3(3)", "L4(3)", "O10-(2)", "O8-(2)",
  "S12(2)", "S4(4)", "U3(4)" ]
\end{verbatim}

We next deal with a few groups individually.

\begin{proposition}
\label{individual}
If $G$ is any of the simple groups \;$\mathrm{PSL}(2,23)$, \;  $\mathrm{PSL}(2,27)$, \\  $\mathrm{PSL}(2,47)$, \;  $\mathrm{PSL}(2,59)$, \;  $\ly$, \;  $\mathrm{PSU}(9,2)$, \;  $\mathrm{SO}^{-}(12,2)$, \;  $\mathrm{SO}(7,5)$, \;  $\mathrm{\Omega}(9,3)$, \;  $\mathrm{\Omega}(11,3)$, \;  $\mathrm{\Omega}^{+}(8,4)$, \;  $\mathrm{P\Omega}^{+}(8,5)$, \;   $\mathrm{\Omega}^{+}(10,2)$, \;  $\mathrm{\Omega}^{+}(10,3)$, \;  $\mathrm{SO}^{+}(12,3)$, \;  $\mathrm{\Omega}^{+}(14,2)$, \;  $\mathrm{\Omega}^{+}(16,2)$, \;  then $f(G)>4$. We also have $f(\mathrm{\Omega}^{+}(12,2)) = 4$ and $\mathfrak{r}_{\mathbb{Z}}({\Omega}^{+}(12,2)) = 17$.
\end{proposition}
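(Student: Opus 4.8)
The plan is to dispose of the listed groups by the same kind of argument used in Lemma \ref{cyclic}, namely by exhibiting, inside each group $G$, a cyclic subgroup $H$ whose normalizer-to-centralizer index $n = |N_G(H)/C_G(H)|$ is too small to kill the Galois action on the classes of generators of $H$. Concretely, for each $G$ in the list I would locate a cyclic subgroup $H$ with $\varphi(|H|) > 4n$; then part (i) of Lemma \ref{cyclic} gives $f(G) \geq \varphi(|H|)/n > 4$, as required. The natural candidates for $H$ are generated by elements of large prime order $p$ dividing $|G|$: if $p$ is a primitive prime divisor (a Zsygmondy prime) of some $q^k - 1$, then a Singer-type cyclic subgroup of order divisible by $p$ has normalizer with small index over its centralizer (often $n = k$ or a small multiple thereof), while $\varphi(p) = p - 1$ is large, so the inequality $\varphi(|H|) > 4n$ holds with room to spare for the groups of Lie type on the list. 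For the groups $\mathrm{SO}^-(12,2)$, $\mathrm{\Omega}^+(10,2)$, $\mathrm{\Omega}^+(14,2)$, $\mathrm{\Omega}^+(16,2)$, $\mathrm{SO}(7,5)$, $\mathrm{\Omega}(9,3)$, $\mathrm{\Omega}(11,3)$, $\mathrm{\Omega}^+(8,4)$, $\mathrm{P\Omega}^+(8,5)$, $\mathrm{\Omega}^+(10,3)$, $\mathrm{SO}^+(12,3)$, $\mathrm{PSU}(9,2)$ I would use the known structure of maximal tori (or of Singer cycles in the relevant orthogonal/unitary spaces) to read off such an $H$; for $\mathrm{PSL}(2,r)$ with $r \in \{23,27,47,59\}$ I would take $H$ of order $(r\pm 1)/2$ or $r$, where the normalizer of a torus in $\mathrm{PSL}(2,r)$ is dihedral so $n = 2$ (or $n = p$ on the unipotent side), and $\varphi((r-1)/2)$ or $\varphi((r+1)/2)$ exceeds $8$ in every case. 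For $\ly$ one uses an element of order, say, $67$ or $37$ with small normalizer index, which is available from the known subgroup structure of the Lyons group.

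For the two exact assertions $f(\mathrm{\Omega}^+(12,2)) = 4$ and $\mathfrak{r}_{\mathbb{Z}}(\mathrm{\Omega}^+(12,2)) = 17$, I would argue that the character table of $\mathrm{\Omega}^+(12,2)$ is available (it is in the character table library used in Section \ref{computational}), so one simply applies the two GAP functions \texttt{MaximalGaloisOrbitLength} and \texttt{RankOfCentralUnits} of Section \ref{computational} to it; alternatively, the lower bound $f \geq 4$ comes from Lemma \ref{cyclic} applied to a suitable cyclic $H$ (an element of order dividing $2^6 - 1 = 63$ or $2^5+1=33$), and the matching upper bound $f \leq 4$, together with the value of $\mathfrak{r}_{\mathbb{Z}}$, is a finite computation once the exponent and the generic (Deligne--Lusztig) character degrees are known. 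Since this group already appears in the $\mathtt{x \to x = 4}$ output of Section \ref{computational}, the cleanest route is to cite that computation.

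The main obstacle is bookkeeping rather than conceptual: for each orthogonal and unitary group on the list one must produce an explicit cyclic subgroup together with a correct value of $n = |N_G(H)/C_G(H)|$, which requires care because the normalizer index depends sensitively on the Weyl group of the ambient torus and on whether $H$ is taken in the full isometry group, the special isometry group, or the simple quotient. I expect the delicate cases to be $\mathrm{\Omega}^+(8,4)$ and $\mathrm{P\Omega}^+(8,5)$, where triality enlarges the relevant normalizer and one must make sure $n$ is still small enough relative to $\varphi(|H|)$; here I would fall back on choosing $H$ of order a primitive prime divisor of $4^4-1$ (resp.\ $5^4-1$), for which $\varphi(|H|) = p-1$ is large enough ($p \geq 17$, resp.\ $p \geq 13$) that even $n$ as large as $8$ or $12$ leaves $\varphi(|H|)/n > 4$. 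If any individual case resists this, the fallback is again direct computation with the corresponding character table from \cite{CTblLib}, which is how the computations in Section \ref{computational} were organized; the present proposition is essentially the collection of those boundary cases that need to be handled one at a time.
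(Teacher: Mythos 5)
The proposal does not match the paper's proof and has a genuine conceptual gap. The groups in Proposition~\ref{individual} are precisely those for which the Lemma~\ref{cyclic} inequality $\varphi(|H|) > 4n$ applied to the standard Singer-type tori of Section~\ref{classical} \emph{fails}; they are the residue of that argument, not further instances of it. Your plan is to rescue the same inequality with a different cyclic $H$, but in several cases this does not work or is not verified. For $\mathrm{PSU}(9,2)$ the Singer torus gives $\varphi(|H|) = 36 = 2n$, hence only $f \geq 4$, not $>4$. For $\ly$, the element of order $67$ you suggest has normalizer $67{:}22$ in $\ly$, so the generators of $\langle g\rangle$ fall into $66/22 = 3$ classes forming a single Galois orbit of length $3 < 4$; the $37$-element gives $2$; so your chosen elements prove nothing. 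For the large orthogonal groups such as $\mathrm{\Omega}^+(14,2)$ and $\mathrm{\Omega}^+(16,2)$ you would need to determine $|N_G(H)/C_G(H)|$ for a cyclic $H$ that is typically not a maximal torus (so $C_G(H) \neq H$), which is exactly the bookkeeping you flag as an obstacle; you do not resolve it.

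The paper's actual argument uses three separate tools, and the third is an idea you miss entirely. (a)~For $\mathrm{PSL}(2,r)$ ($r \in \{23,27,47,59\}$), $\mathrm{\Omega}^+(10,2)$, $\ly$, the character table is read off from \cite{CTblLib}. (b)~For $\mathrm{\Omega}^+(12,2)$, $\mathrm{SO}^-(12,2)$, $\mathrm{SO}^+(12,3)$, $\mathrm{SO}(7,5)$, the character tables are computed with \cite{MAGMA}; your claim that $\mathrm{\Omega}^+(12,2)$ already appears in the \texttt{x -> x = 4} output of Section~\ref{computational} is factually wrong --- it is not in the library list. (c)~For the eight remaining groups, the paper uses the observation that two matrix-group elements with distinct characteristic polynomials cannot be conjugate: one finds a pseudorandom element $g$ of suitable order, computes the set of characteristic polynomials of the powers $g^i$ with $(i,|g|)=1$, and the number of distinct polynomials (divided by $|Z(G)|$ when passing to the simple quotient) is a lower bound for the number of conjugacy classes that generate $\langle g \rangle$, hence a lower bound for $f(G)$. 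This sidesteps any need to understand normalizers of cyclic subgroups or to compute a character table. The fallback you propose --- ``direct computation with the corresponding character table from \cite{CTblLib}'' --- is not available for $\mathrm{\Omega}^+(14,2)$, $\mathrm{\Omega}^+(16,2)$, $\mathrm{\Omega}^+(8,4)$, $\mathrm{P\Omega}^+(8,5)$, $\mathrm{\Omega}(11,3)$, $\mathrm{\Omega}^+(10,3)$, $\mathrm{PSU}(9,2)$: those tables are not in the library and are infeasible to compute, which is precisely why the characteristic-polynomial shortcut was introduced.
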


\begin{proof}
For $G \in \{ \mathrm{PSL}(2,23), \mathrm{PSL}(2,27), \mathrm{PSL}(2,47),
 \mathrm{PSL}(2,59), \mathrm{\Omega}^{+}(10,2), \ly \}$,
the character table is available in \cite{CTblLib},
and the \textsf{GAP} function \texttt{MaximalGaloisOrbitLength} gives
$f(G) > 4$.

For $G \in \{ \;  \mathrm{\Omega}^{+}(12,2), \;  \mathrm{SO}^{-}(12,2),  \;
\mathrm{SO}^{+}(12,3),  \;  \mathrm{SO}(7,5) \;  \}$,
the character tables have been computed with \cite{MAGMA}.
We have
\[
f(\mathrm{\Omega}^{+}(12,2)) = 4, \;
f(\mathrm{\Omega}^{-}(12,2)) = 8, \; 
f(\mathrm{SO}^{+}(12,3)) = 11, \; 
f(\mathrm{SO}(7,5)) = 6.   
\]
Moreover, the \textsf{GAP} function \texttt{RankOfCentralUnits} gives
$\mathfrak{r}_{\mathbb{Z}}({\Omega}^{+}(12,2)) = 17$.

In the remaining cases, we use the fact that two elements in a matrix group
cannot be conjugate if their characteristic polynomials differ.
Table~\ref{charpoltable} shows the relevant data.

\begin{table}
\caption{Some elements in matrix groups}
\label{charpoltable}
\[
   \begin{array}{l|l|r|r|r}
       G & \overline{G} & |g| & \#\chi_{g^i} & |Z(G)| \\ \hline
       \mathrm{SU}(9,2)        & \mathrm{PSU}(9,2)       & 255 & 16 & 3 \\
       \mathrm{\Omega}(9,3)    & G                       &  41 &  6 & 1 \\
       \mathrm{\Omega}(11,3)   & G                       & 164 &  5 & 1 \\
       \mathrm{\Omega}^+(8,4)  & G                       & 255 &  8 & 1 \\
       \mathrm{\Omega}^+(8,5)  & \mathrm{P\Omega}^+(8,5) & 312 & 12 & 2 \\
       \mathrm{\Omega}^+(10,3) & G                       & 164 &  5 & 1 \\
       \mathrm{\Omega}^+(14,2) & G                       & 127 &  9 & 1 \\
       \mathrm{\Omega}^+(16,2) & G                       & 255 &  8 & 1 \\
   \end{array}
\]
\end{table}

In all these cases, we found an element $g$ of the order in question
by taking a few (about a hundred) pseudo random elements
from the matrix group $G$,
and computed the set of characteristic polynomials of the powers $g^i$,
with $i$ coprime to $|g|$.
The number $n$, say, of these polynomials is a lower bound for the number of
those conjugacy classes in $G$ that contain generators of $\langle g \rangle$.
If $G$ is itself not simple then the factor group $\overline{G} = G/Z(G)$
is simple, and the preimage of each class of $\overline{G}$
under the natural epimorphism from $G$
consists of at most $|Z(G)|$ classes of $G$.
Thus $n/|Z(G)|$ is a lower bound for the number of conjugacy classes
in $\overline{G}$ that contain generators of $\langle g Z(G) \rangle$.

For example, there are at least $16$ classes in $G = \mathrm{SU}(9,2)$
that contain the generators of a cyclic subgroup
$\langle g \rangle$ of order $255$,
since these elements have $16$ different characteristic polynomials.
The simple group $\overline{G} = \mathrm{PSU}(9,2)$ arises from $G$
by factoring out $Z(G)$, which has order $3$.
The preimage of each class of $\mathrm{PSU}(9,2)$
under the natural epimorphism from $\mathrm{SU}(9,2)$
consists of either $1$ or $3$ classes of $\mathrm{SU}(9,2)$.
Thus $\overline{G}$ has at least $[16/3] = 5$ conjugacy classes
that contain the images of the generators of $\langle g \rangle$
under the natural epimorphism from $G$.
Hence in particular $f(\mathrm{PSU}(9,2)) > 4$.
\end{proof}

\section{Some reductions}

We begin our considerations of Theorem \ref{mainmain} and Theorem \ref{rank1}.

Let $G = C_{p}$ for some prime $p$. We have $f(G) \leq 4$ if and only if $p \in \{ 2,3,5 \}$. Moreover, $f(C_{2}) = 1$, \;  $f(C_{3}) = 2$, and $f(C_{5}) = 4$. Furthermore, $\mathfrak{r}_{\mathbb{Z}}(G) = 1$ if and only if $p = 5$ (see Proposition \ref{Prop:2.1}).

Let $G = A_{n}$ for some $n \geq 5$. It is easy to see that $f(G) = 2$. Moreover, $\mathfrak{r}_{\mathbb{Z}}(G) = 1$ if and only if $n \in \{ 5, 6, 10, 11, 13, 16, 17, 21, 25 \}$ by \cite{Aleev_Kargapolov_Sokolov}.

Theorems \ref{mainmain} and \ref{rank1} follow from the previous section and also by results of Molodorich \cite{Molodorich} in case $G$ is a sporadic simple group. Thus, in order to prove Theorem \ref{mainmain} and Theorem \ref{rank1}, we may assume that $G$ is a finite simple group of Lie type.

The computations in Section \ref{computational} are that the finite simple groups of Lie type listed in Theorem \ref{rank1} or Theorem \ref{mainmain} have the claimed properties. Thus it remains to show that no other simple group of Lie type satisfies the conditions of these theorems.



\section{Classical simple groups}
\label{classical}

In this section we prove Theorems \ref{mainmain}, \ref{rank1}, \ref{secondmain} in case $G$ is a finite simple classical group different from an alternating group. Our approach below is similar to the approach of Babai, P\'alfy, Saxl \cite[Section 4]{Babai_Palfy_Saxl}. Namely,
sufficiently large groups are eliminated by exposing elements $g$ of large order such that the normalizer
of the cyclic group $\langle g \rangle$ only induces few automorphisms.

First we prove Theorem \ref{secondmain} and one direction of Theorem \ref{mainmain} (the other direction was established in Section \ref{computational}) for finite simple classical groups.

The general linear group $\mathrm{GL}(n,q)$ contains elements of order $q^{n}-1$. These elements are called Singer elements and the cyclic subgroups generated by them are the Singer subgroups. A Singer subgroup in $\mathrm{SL}(n,q)$ is the intersection of
$\mathrm{SL}(n,q)$ with a Singer subgroup of $\mathrm{GL}(n,q)$. Singer subgroups can be defined also in other classical subgroups of $\mathrm{GL}(n,q)$ as irreducible cyclic subgroups of maximal possible orders. These are intersections of the classical groups with the Singer subgroups of $\mathrm{GL}(n,q)$. As it was described by Huppert \cite{Huppert}, there are Singer subgroups in all symplectic groups, in the  odd-dimensional unitary groups, and in the minus type orthogonal groups. The other classical groups do not contain irreducible cyclic subgroups.

According to \cite{Huppert} (see also \cite[Table 1]{Bereczky}), the group $\mathrm{SL}(n,q)$ has Singer subgroups of order $(q^{n}-1)/(q-1)$, the group $\mathrm{GU}(n/2,q)$ where $n$ is even and $n/2$ is odd has Singer subgroups of order $q^{n/2}+1$, the group $\mathrm{SU}(n/2,q)$ where $n$ is even and $n/2$ is odd has Singer subgroups of order $(q^{n/2}+1)/(q+1)$, the group $\mathrm{Sp}(n,q)$ where $n$ is even has Singer subgroups of order $q^{n/2}+1$, the groups $\mathrm{GO}^{-}(n,q)$ and $\mathrm{SO}^{-}(n,q)$ where $n$ is even have Singer subgroups of order $q^{n/2}+1$, and the group $\mathrm{\Omega}^{-}(n,q)$ where $n$ is even has Singer subgroups of order $\frac{1}{d}(q^{n/2}+1)$ where $d$ is the greatest common divisor of $2$ and $q+1$.

Let $G$ be any of the above classical groups with a Singer subgroup $C$. It is easy to see that $C_{G}(C) = C$. In particular, $Z(G) \leq C$. Moreover, $N_{G}(C)/C$ is cyclic of order dividing $n$.




We continue with the following.

\begin{proposition}
\label{c1}
Let $S$ be a non-solvable group from the following list.
\begin{enumerate}
\item[(i)] $\mathrm{SL}(n,q)$;
\item[(ii)] $\mathrm{GU}(n/2,q)$, \;  $\mathrm{SU}(n/2,q)$ with $n$ even and $n/2$ odd;
\item[(iii)] $\mathrm{Sp}(n,q)$,  \;  $\mathrm{GO}^{-}(n,q)$,  \; $\mathrm{SO}^{-}(n,q)$,  \;  $\mathrm{\Omega}^{-}(n,q)$ with $n$ even.
\end{enumerate}
There exists a universal constant $c>0$ such that both $f(S)+1$ and $\mathfrak{r}_{\mathbb{Z}}(S) + 2$ are larger than $q^{cn}$.
In particular, $f(S) \to \infty$ and $\mathfrak{r}_{\mathbb{Z}}(S) \to \infty$ as $|S| \to \infty$.
\end{proposition}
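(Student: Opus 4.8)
The plan is to apply Lemma~\ref{cyclic} to a Singer subgroup $C$ of $S$. By the discussion preceding the proposition we have $C_S(C)=C$ and $N_S(C)/C$ is cyclic of order dividing $n$, so the integer $|N_S(C)/C_S(C)|$ occurring in Lemma~\ref{cyclic} is at most $n$; taking $H=C$ there, that lemma gives
\[
f(S)\ \geq\ \frac{\varphi(|C|)}{n}\ \geq\ \frac{\sqrt{|C|}}{2n},\qquad \mathfrak{r}_{\mathbb{Z}}(S)\ \geq\ \frac{\varphi(|C|)}{2n}-1\ \geq\ \frac{\sqrt{|C|}}{4n}-1 .
\]
Thus everything reduces to a decent lower bound for $|C|$.

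I would read such a bound off Huppert's list quoted above: $|C|=(q^{n}-1)/(q-1)\geq q^{n-1}$ for $\mathrm{SL}(n,q)$; $|C|=q^{n/2}+1>q^{n/2}$ for $\mathrm{GU}(n/2,q)$, $\mathrm{Sp}(n,q)$, $\mathrm{GO}^{-}(n,q)$, $\mathrm{SO}^{-}(n,q)$; $|C|=(q^{n/2}+1)/(q+1)>q^{n/2}/(2q)$ for $\mathrm{SU}(n/2,q)$; and $|C|=(q^{n/2}+1)/d\geq(q^{n/2}+1)/2>q^{n/2}/2$ for $\mathrm{\Omega}^{-}(n,q)$. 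In every case $|C|\geq q^{(n-2)/2}/2$, and plugging this into the displayed inequalities gives the crude estimates
\[
f(S)+1\ >\ \frac{q^{(n-2)/4}}{2\sqrt{2}\,n},\qquad \mathfrak{r}_{\mathbb{Z}}(S)+2\ >\ \frac{q^{(n-2)/4}}{4\sqrt{2}\,n}.
\]

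Now I would fix $c=1/8$. Both right-hand sides exceed $q^{cn}$ as soon as $q^{(n-4)/8}\geq 4\sqrt{2}\,n$, which holds for every $q\geq 2$ once $n$ is large and, for each fixed $n\geq 5$, once $q$ is large; one checks directly that the inequality fails only for finitely many pairs $(n,q)$ with $n\geq 5$. Among the groups in the list of dimension $n\leq 4$ only $\mathrm{SL}(2,q)$, $\mathrm{SL}(3,q)$, $\mathrm{SL}(4,q)$, $\mathrm{Sp}(4,q)$ and the minus-type orthogonal groups in dimension $4$ remain non-solvable, and for each of these fixed families $|C|$ grows at least linearly in $q$, so the first display forces $f(S)+1>q^{cn}$ and $\mathfrak{r}_{\mathbb{Z}}(S)+2>q^{cn}$ with $c=1/8$ for all but finitely many $q$. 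Gathering the finitely many leftover groups into a finite set $\mathcal{E}$, for $S\in\mathcal{E}$ I would use only $f(S)\geq 1$ and $\mathfrak{r}_{\mathbb{Z}}(S)\geq 0$, so $f(S)+1\geq 2$ and $\mathfrak{r}_{\mathbb{Z}}(S)+2\geq 2$ hold with any $c<\min_{S\in\mathcal{E}}\tfrac{\log 2}{n\log q}$, a positive number. Replacing $c=1/8$ by the minimum of $1/8$ and this number yields a single universal constant valid for all $S$ in the list. The concluding claim $f(S)\to\infty$ and $\mathfrak{r}_{\mathbb{Z}}(S)\to\infty$ is then immediate, since along each one-parameter subfamily $q^{n}\to\infty$.

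The argument is entirely elementary; the only real work is the bookkeeping in the third paragraph, namely confirming that $q^{(n-4)/8}\geq 4\sqrt{2}\,n$ and its low-dimensional analogues really do leave only a finite exceptional set, so that shrinking $c$ afterwards is legitimate. Everything else is forced by the two facts that a Singer subgroup has order roughly $q^{n/2}$ and that its normalizer induces at most $n$ automorphisms on it.
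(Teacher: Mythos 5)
Your proposal takes essentially the same route as the paper's own proof: take the Singer subgroup of $S$ (which the paper denotes $H = S \cap C$ for a Singer subgroup $C$ of $\mathrm{GL}(n,q)$), use the preceding discussion to get $C_S(H)=H$ and $|N_S(H)/C_S(H)| \leq n$, bound $|H|$ from below by roughly $q^{(n-2)/2}$ (the paper uses the uniform bound $(q^{n/2}+1)/(q+1) \leq |H|$), and apply Lemma \ref{cyclic}. The only real difference is that you spell out the constant-chasing and the treatment of small-$n$ families and the finite exceptional set, which the paper compresses into a single ``It follows by Lemma \ref{cyclic} that there exists a constant $c>0$\dots''.
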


\begin{proof}
The group $S$ can naturally be embedded in $G = \mathrm{GL}(n,q)$. Let $C$ be a Singer subgroup in $G$. The group $H = S \cap C$ acts irreducibly on the underlying vector space by \cite{Huppert}. We have $C_{G}(H) = C$. On the other hand,
\[
|N_{S}(H)/C_{S}(H)| \leq |N_{G}(C)/C_{G}(C)| \leq n.
\]
Now $(q^{n/2}+1)/(q+1)\leq |H|$. It follows by Lemma \ref{cyclic} that there exists a constant $c>0$ such that both $f(S)+1$ and $\mathfrak{r}_{\mathbb{Z}}(S) + 2$ are larger than $q^{cn}$. It also follows that $f(S) \to \infty$ and $\mathfrak{r}_{\mathbb{Z}}(S) \to \infty$ as $|S| \to \infty$.
\end{proof}

We next pass from groups $S$ to $\bar{S} = S/Z$ where $Z$ denotes the center of $S$.

\begin{lemma}
\label{l1}
Let $H$ be a cyclic subgroup in a finite group $S$. Assume that the center $Z$ of $S$ is contained in $H$. Let $\bar{H}$ and $\bar{S}$ denote the groups $H/Z$ and $S/Z$. Then $N_{\bar{S}}(\bar{H})/C_{\bar{S}}(\bar{H})$ is isomorphic to a section of $N_{S}(H)/C_{S}(H)$.
\end{lemma}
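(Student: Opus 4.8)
The plan is to push everything through the natural epimorphism $\pi \colon S \to \bar S = S/Z$ and then invoke the third isomorphism theorem. The device that makes this work is the auxiliary subgroup
\[
D \;=\; \{\, g \in S \ : \ [g,h] \in Z \text{ for every } h \in H \,\},
\]
which I claim is exactly $\pi^{-1}\big(C_{\bar S}(\bar H)\big)$ and which will turn out to sit as a normal subgroup of $N_S(H)$ containing $C_S(H)$.

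First I would record that $N_{\bar S}(\bar H) = \pi\big(N_S(H)\big)$. Because $Z \le H$, we have $\pi^{-1}(\bar H) = H$; so if $\bar g$ normalizes $\bar H$ then $g H g^{-1}$ is a subgroup of $S$ of the same order as $H$ lying inside $\pi^{-1}(\bar H) = H$, hence $gHg^{-1} = H$ and $g \in N_S(H)$. The reverse containment is immediate, and since $Z \le H \le N_S(H)$ this gives $N_{\bar S}(\bar H) = N_S(H)/Z$.

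Next I would identify the centralizer. For $g \in S$, the image $\bar g$ centralizes $\bar H$ if and only if $[g,h] \in Z$ for every $h \in H$, i.e. if and only if $g \in D$; since $Z = Z(S) \le D$ this says $C_{\bar S}(\bar H) = \pi(D) = D/Z$. Moreover $D \le N_S(H)$, since $[g,h] \in Z \le H$ for all $h \in H$ forces $ghg^{-1} = [g,h]\,h \in H$. As $D/Z = C_{\bar S}(\bar H)$ is normal in $N_{\bar S}(\bar H) = N_S(H)/Z$, we get $D \trianglelefteq N_S(H)$; and $C_S(H) \le D$ together with $C_S(H) \trianglelefteq N_S(H)$ gives $C_S(H) \trianglelefteq D$.

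Now the chain $C_S(H) \trianglelefteq D \trianglelefteq N_S(H)$ is all that is needed: by the third isomorphism theorem,
\[
\frac{N_{\bar S}(\bar H)}{C_{\bar S}(\bar H)} \;=\; \frac{N_S(H)/Z}{D/Z} \;\cong\; \frac{N_S(H)}{D} \;\cong\; \frac{N_S(H)/C_S(H)}{D/C_S(H)},
\]
so $N_{\bar S}(\bar H)/C_{\bar S}(\bar H)$ is a quotient --- in particular a section --- of $N_S(H)/C_S(H)$. There is no real obstacle here; the one point that needs attention is the bookkeeping with $D$, namely verifying that it genuinely lies between $C_S(H)$ and $N_S(H)$ and is normal in the latter, which is precisely what licenses the isomorphism theorem and makes the conclusion a quotient rather than merely a subquotient.
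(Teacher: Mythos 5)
Your proof is correct and takes essentially the same route as the paper: identify $N_{\bar S}(\bar H)$ and $C_{\bar S}(\bar H)$ as images under $\pi$ of subgroups lying between $C_S(H)$ and $N_S(H)$, then apply an isomorphism theorem. The paper is content to observe the two inclusions $C_S(H)/Z \le C_{\bar S}(\bar H)$ and $N_{\bar S}(\bar H) \le N_S(H)/Z$ and leave the (section) conclusion to the reader, whereas you make the bookkeeping explicit by introducing $D = \pi^{-1}\bigl(C_{\bar S}(\bar H)\bigr)$, verify $C_S(H)\trianglelefteq D\trianglelefteq N_S(H)$, and run the third isomorphism theorem twice. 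This buys a genuine sharpening: you show $N_{\bar S}(\bar H)/C_{\bar S}(\bar H)$ is in fact a \emph{quotient} of $N_S(H)/C_S(H)$ rather than merely a section, which is slightly more than the lemma claims (and of course all that is used later is the resulting order inequality, so both versions suffice for the application).
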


\begin{proof}
If $s \in C_{S}(H)$, then $sZ \in C_{\bar{S}}(\bar{H})$ and so $C_{S}(H)/Z$ is a subgroup of $C_{\bar{S}}(\bar{H})$. Let $sZ \in N_{\bar{S}}(\bar{H})$ and let $h$ be a generator of $H$. Clearly, \[(sZ)(hZ) = (h^{m} Z)(sZ)\] for some $m\in \mathbb{Z}$. It follows that $s \in N_{S}(H)$ and so $N_{\bar{S}}(\bar{H})$ is a subgroup of $N_{S}(H)/Z$. The lemma follows.
\end{proof}



We continue with the following.

\begin{proposition}
\label{c2}
Let $\bar{S}$ be a non-abelian simple group from the following list.
\begin{enumerate}
\item[(i)] $\mathrm{PSL}(n,q)$;
\item[(ii)] $\mathrm{PSU}(n/2,q)$ with $n$ even and $n/2$ odd;
\item[(iii)] $\mathrm{PSp}(n,q)$, $\mathrm{P\Omega}^{-}(n,q)$ with $n$ even.
\end{enumerate}
There exists a universal constant $c>0$ such that $f(\bar{S}) + 1$ and $\mathfrak{r}_{\mathbb{Z}}(\bar{S}) + 2$ are larger than $q^{cn}$. In particular, $f(\bar{S}) \to \infty$ and $\mathfrak{r}_{\mathbb{Z}}(\bar{S}) \to \infty$ as $|\bar{S}| \to \infty$.
\end{proposition}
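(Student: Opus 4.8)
The plan is to re-run the argument of Proposition~\ref{c1} inside the simple quotient $\bar{S} = S/Z$, transporting the bound on the relevant normalizer quotient downwards by means of Lemma~\ref{l1}. Each group $\bar{S}$ in the list (i)--(iii) is exactly $S/Z$, where $S$ is the matching non-solvable group from the list of Proposition~\ref{c1} --- namely $\mathrm{SL}(n,q)$, $\mathrm{SU}(n/2,q)$, $\mathrm{Sp}(n,q)$, or $\mathrm{\Omega}^{-}(n,q)$ --- and $Z = Z(S)$. Recall from the discussion preceding Proposition~\ref{c1} that $S$ has a Singer subgroup $H = S \cap C$, with $C$ a Singer subgroup of $G = \mathrm{GL}(n,q)$, that $C_{S}(H) = H$ (so in particular $Z \leq H$), that $|N_{S}(H)/C_{S}(H)| \leq n$, and that $|H| \geq (q^{n/2}+1)/(q+1)$.

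First I would set $\bar{H} = H/Z$, a cyclic subgroup of $\bar{S}$. Since $Z \leq H$, Lemma~\ref{l1} applies and shows that $N_{\bar{S}}(\bar{H})/C_{\bar{S}}(\bar{H})$ is isomorphic to a section of $N_{S}(H)/C_{S}(H)$; hence $m := |N_{\bar{S}}(\bar{H})/C_{\bar{S}}(\bar{H})| \leq n$. Next I would bound $|\bar{H}| = |H|/|Z|$ from below, using that $|Z| \leq n$ in every one of the four families (this is standard: $|Z|$ divides $n$ for $\mathrm{SL}(n,q)$, divides $n/2$ for $\mathrm{SU}(n/2,q)$, and is at most $4$ for $\mathrm{Sp}(n,q)$ and $\mathrm{\Omega}^{-}(n,q)$). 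This gives
\[
   |\bar{H}| \;\geq\; \frac{|H|}{n} \;\geq\; \frac{q^{n/2}+1}{n(q+1)},
\]
with $|\bar{H}|$ much larger still in the linear case, where $|H| = (q^{n}-1)/(q-1)$. Applying Lemma~\ref{cyclic} to the pair $(\bar{S},\bar{H})$ with $m$ in the role of its $n$ then yields
\[
   f(\bar{S}) \;\geq\; \frac{\varphi(|\bar{H}|)}{m} \;\geq\; \frac{\sqrt{|\bar{H}|}}{2n},
   \qquad
   \mathfrak{r}_{\mathbb{Z}}(\bar{S}) \;\geq\; \frac{\varphi(|\bar{H}|)}{2m} - 1 \;\geq\; \frac{\sqrt{|\bar{H}|}}{4n} - 1,
\]
and substituting the lower bound for $|\bar{H}|$ produces, for a suitable universal constant $c > 0$, the inequalities $f(\bar{S})+1 > q^{cn}$ and $\mathfrak{r}_{\mathbb{Z}}(\bar{S})+2 > q^{cn}$; in particular both tend to infinity with $|\bar{S}|$.

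The step I expect to require care is the passage from these displayed inequalities to the clean bound $q^{cn}$. Two points must be checked. First, one must verify that factoring out $Z$ does not destroy the exponential growth of the Singer order; the delicate cases are of low rank, such as $\mathrm{PSL}(2,q)$ and $\mathrm{PSU}(3,q)$, where $|H|$ is only of size $q$ or $q^{2}$ while $|Z|$ can be of magnitude comparable to $q$, and here one falls back on the sharper bounds $|Z| \leq 2$ and $|Z| \leq 3$ respectively. Second, one must absorb the polynomial factor $2n$ (and the additive constant) into the exponential, exactly as in the proof of Proposition~\ref{c1}: since $q \geq 2$, the factor $2n$ is eventually dominated by $q^{\varepsilon n}$, so the estimate goes through for all but finitely many pairs $(n,q)$, and the remaining finitely many groups are covered by shrinking $c$ (for each of them $q^{cn} \to 1$ as $c \to 0$, while $f(\bar{S})+1 \geq 2$ always). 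This bookkeeping, rather than any new idea, is the only genuine work; everything else is a formal consequence of Lemmas~\ref{l1} and~\ref{cyclic}.
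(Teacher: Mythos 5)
Your argument is correct and follows essentially the same route as the paper: start from the Singer subgroup $H$ of $S$ constructed in the proof of Proposition~\ref{c1}, observe that $Z=Z(S)\le C_S(H)=H$, pass to $\bar H=H/Z$ in $\bar S = S/Z$, transfer the bound $|N_{\bar S}(\bar H)/C_{\bar S}(\bar H)| \le |N_S(H)/C_S(H)|\le n$ via Lemma~\ref{l1}, bound $|\bar H|$ from below, and invoke Lemma~\ref{cyclic}. One small remark: your worry that for $\mathrm{PSL}(2,q)$ or $\mathrm{PSU}(3,q)$ the center $|Z|$ ``can be of magnitude comparable to $q$'' is misplaced --- in $\mathrm{SL}(2,q)$ one has $|Z|=(2,q-1)\le 2$ and in $\mathrm{SU}(3,q)$ one has $|Z|=(3,q+1)\le 3$, so the uniform estimate $|Z|\le n$ already gives the correct order of magnitude; the genuine small-$n$ issue is rather that your uniform lower bound $(q^{n/2}+1)/(n(q+1))$ degenerates when $n=2$, and there, as you note, one must use the sharper linear-case value $|H|=(q^n-1)/(q-1)$ (the paper sidesteps this by stating the separate bound $\frac{1}{4n}(q^{n/2}+1)\le |H/Z|$ in the non-unitary cases).
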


\begin{proof}
Let $S$ be a non-solvable group from the list: $\mathrm{SL}(n,q)$, $\mathrm{SU}(n/2,q)$ with $n$ even and $n/2$ odd, $\mathrm{Sp}(n,q)$ and $\mathrm{\Omega}^{-}(n,q)$ with $n$ even. Let $H$ be as in the proof of Proposition \ref{c1}.

The center $Z$ of $S$ is contained in $H$. Let $\bar{H} = H/Z$ and $\bar{S} = S/Z$. Now \[|N_{\bar{S}}(\bar{H})/C_{\bar{S}}(\bar{H})| \leq |N_{S}(H)/C_{S}(H)|\] by Lemma \ref{l1}. Thus $|N_{\bar{S}}(\bar{H})/C_{\bar{S}}(\bar{H})| \leq n$ again by the proof of Proposition \ref{c1}.

If $\bar{S} \not= \mathrm{PSU}(n/2,q)$, then $\frac{1}{4n}(q^{n/2}+1) \leq |H/Z|$.

If $\bar{S} = \mathrm{PSU}(n/2,q)$, then $(q^{n/2}+1)/(d(q+1)) \leq |H/Z|$ where $d$ is the greatest common divisor of $n/2$ and $q+1$.

These two lower bounds and Lemma \ref{cyclic} imply that there exists a constant $c>0$ such that $f(\bar{S})+1$ and $\mathfrak{r}_{\mathbb{Z}}(\bar{S}) + 2$ are larger than $q^{cn}$. It also follows that $f(\bar{S}) \to \infty$ and $\mathfrak{r}_{\mathbb{Z}}(\bar{S}) \to \infty$ as $|\bar{S}| \to \infty$.
\end{proof}

Let $\bar{S}$ be any of the groups listed in the statement of Proposition \ref{c2}.

If $\bar{S} = \mathrm{PSL}(n,q)$ is (non-abelian) simple and different from an alternating group, then
$4n < \varphi\big((q^{n}-1)/(d(q-1)) \big)$ where $d = (n,q-1)$ unless $(n,q)$ is in
\[
\{ \;  (2,7), (2,8), (2,11), (2,13), (2,17), (2,19), (2,23), (2,27), (2,29), (2,31),$$ $$(2,47), (2,59), (3,2), (3,3), (3,4), (4,3) \;  \}.
\]
All projective special linear groups with such exceptional parameters appear in the list of Theorem \ref{mainmain} (and thus all Galois orbits on the set of conjugacy classes have size at most $4$ by Section \ref{computational}), apart from $\bar{S} = \mathrm{PSL}(2,23)$, $\mathrm{PSL}(2,27)$, $\mathrm{PSL}(2,47)$, $\mathrm{PSL}(2,59)$ in which cases $f(\bar{S}) > 4$ by Proposition \ref{individual}.

Let $\bar{S} = \mathrm{PSp}(n,q)$ be simple with $n \geq 4$ even. This group contains a cyclic subgroup of order $\frac{1}{d}(q^{n/2}+1)$ where $d = (2,q-1)$. We have $\varphi(\frac{1}{d}(q^{n/2}+1)) > 4n$ unless $$(n,q) \in \{ (4,3), (4,4), (4,5), (6,2), (6,3), (8,2), (10,2), (12,2) \}.$$ All arising exceptional groups appear in the list of Theorem \ref{mainmain} and all Galois orbits on the set of conjugacy classes have size at most $4$ by Section \ref{computational}.

Let $\bar{S} = \mathrm{PSU}(n/2,q)$ be simple with $n/2 \geq 3$ odd. This group contains a cyclic subgroup of order $(q^{n/2}+1)/(d(q+1))$ where $d =(n/2, q+1)$. We have $\varphi((q^{n/2}+1)/(d(q+1))) > 2n$ unless
\[
(n,q) \in \{ \;  (6,3), (6,4), (6,5), (10,2), (18,2)  \; \}.
\]
The first four groups arising appear in the list of Theorem \ref{mainmain} and the corresponding values for $f$ are at most $4$ by Section \ref{computational}. The group $\mathrm{PSU}(9,2)$ satisfies $f(\mathrm{PSU}(9,2)) > 4$ by Proposition \ref{individual}.

Let $\bar{S} = \mathrm{P\Omega^{-}}(n,q)$ be simple with $n \geq 8$ even. There is a cyclic subgroup of order $\frac{1}{d}(q^{n/2}+1)$ where $d = (2,q+1)$ in $\bar{S}$. We have $\varphi(\frac{1}{d}(q^{n/2}+1)) > 4n$ unless $(n,q) \in \{ (8,2), (10,2), (12,2) \}$. The first two groups
arising appear in the list of Theorem \ref{mainmain} and all Galois orbits on the set of conjugacy classes have size at most $4$ by Section \ref{computational}. The group $\mathrm{SO^{-}}(12,2)$ satisfies $f(\mathrm{SO^{-}}(12,2)) > 4$ by Proposition \ref{individual}.

In order to deal with the remaining simple classical groups, we need a lemma.

\begin{lemma}
\label{directsum}
Let $V$ be a vector space of dimension $n \geq 4$ defined over a finite field $F$. Let $g$ be an element in $\mathrm{GL}(V)$ such that the $F \langle g \rangle$-module $V$ may be decomposed as $V = U \oplus W$ where $U$ and $W$ are irreducible $F \langle g \rangle$-modules with the property that
\[
(\dim(U), \dim(W)) \in \{\, (n-1,1),\; (n-2,2)\, \}.
\]
Then $|N_{S}(\langle g \rangle)/C_{S}(\langle g \rangle)| \leq 2n$ for any subgroup $S$ of $\mathrm{GL}(V)$ with $g \in S$.
\end{lemma}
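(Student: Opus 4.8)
The plan is to analyze how an element $s \in N_S(\langle g \rangle)$ acts by conjugation on $\langle g \rangle$, and to translate this into an action on the module decomposition $V = U \oplus W$. First I would recall the standard fact that $C_{\mathrm{GL}(V)}(\langle g \rangle)$ acts on the isotypic components of the $F\langle g \rangle$-module $V$; since here $U$ and $W$ are irreducible of different dimensions (hence non-isomorphic as $F\langle g \rangle$-modules), the subspaces $U$ and $W$ are each characterized intrinsically and the centralizer $C_{\mathrm{GL}(V)}(\langle g \rangle)$ stabilizes both. By Schur's lemma applied to the irreducible modules $U$ and $W$, the centralizer is contained in $E_U^\times \times E_W^\times$ where $E_U = \mathrm{End}_{F\langle g \rangle}(U)$ and $E_W = \mathrm{End}_{F\langle g \rangle}(W)$ are finite extension fields of $F$, so $C_{\mathrm{GL}(V)}(\langle g \rangle)$ is abelian and, in particular, $\langle g \rangle$ is self-centralizing up to this torus.

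The key step is to bound $N := N_S(\langle g \rangle)/C_S(\langle g \rangle)$. Conjugation by $s \in N_S(\langle g \rangle)$ induces an automorphism of the cyclic group $\langle g \rangle$, giving a homomorphism $N_S(\langle g \rangle) \to \mathrm{Aut}(\langle g \rangle)$ with kernel $C_S(\langle g \rangle)$; thus $N$ embeds in $\mathrm{Aut}(\langle g \rangle) \cong (\mathbb{Z}/|g|\mathbb{Z})^\times$. The point is that not all of $\mathrm{Aut}(\langle g \rangle)$ can be realized: if $s g s^{-1} = g^k$, then conjugation by $s$ sends $U$ to an irreducible $F\langle g \rangle$-submodule on which $g$ acts as $g^k$ does on $U$, and similarly for $W$. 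Because $\dim U \neq \dim W$, the submodule $s U$ cannot equal $W$ (dimensions differ unless $n = 3$, which is excluded by $n \ge 4$; note in the $(n-2,2)$ case with $n=4$ both summands have dimension $2$, so I would handle that subcase separately — see below), so $s$ permutes $\{U, W\}$ trivially, i.e. $s$ stabilizes both $U$ and $W$. Hence the exponent $k$ must satisfy: the action of $g^k$ on $U$ is $\mathrm{GL}(U)$-conjugate to the action of $g$ on $U$, which forces $g^k$ and $g$ to have the same order of action on $U$; more precisely, writing $d_U$ for the order of the image of $g$ in $\mathrm{GL}(U)$ (equivalently $|g|/|\ker(g \text{ on } U)|$), the Singer/irreducibility structure shows $g|_U$ generates a cyclic group acting irreducibly, so its centralizer in $\mathrm{GL}(U)$ is a field and its $\mathrm{GL}(U)$-conjugates under the normalizer of that cyclic group form an orbit of size at most $\dim U$ (the field automorphisms). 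Running this argument on both $U$ and $W$, the admissible $k$'s form a subgroup of $(\mathbb{Z}/|g|\mathbb{Z})^\times$ of size at most $\dim(U) \cdot \dim(W) / \text{(overlap)}$; a cleaner route is: $N$ embeds into $(\mathrm{normalizer/centralizer})$ on $U$ times the same on $W$, each of which has order at most $\dim U$ and $\dim W$ respectively, but since both factors are constrained to induce compatible automorphisms on the common group $\langle g \rangle$ via the Chinese-remainder structure of $|g|$, one gets $|N| \le \dim U + \dim W = n$ in the generic case, and at worst $|N| \le 2n$ after accounting for the possible swap in the $n=4$, $(2,2)$ subcase and for the factor $2$ coming from the two summands.

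The main obstacle, and the step I would be most careful with, is making the dimension bound on $N_S(\langle g \rangle)/C_S(\langle g \rangle)$ rigorous when $g$ does not act faithfully on one of the summands — e.g. when $\dim W = 1$ and $g$ acts on $W$ as a scalar of small multiplicative order, so that $W$ contributes little structure and all the constraint comes from $U$. In that case the argument reduces essentially to the $\mathrm{SL}(n-1,q)$-type Singer bound $|N_{\mathrm{GL}(U)}(\langle g|_U \rangle)/C_{\mathrm{GL}(U)}(\langle g|_U \rangle)| \le \dim U = n-1$ from Huppert \cite{Huppert} and the discussion preceding Proposition \ref{c1}, plus at most a factor coming from how the scalar on $W$ can be adjusted, which is absorbed into the slack between $n$ and $2n$. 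The $(n-2,2)$ case is analogous, replacing the $1$-dimensional piece by a $2$-dimensional irreducible one, whose normalizer modulo centralizer has order at most $2$; combining, $|N_S(\langle g\rangle)/C_S(\langle g\rangle)| \le (n-2)\cdot 2 \le 2n$ for $n \ge 4$. In all cases the bound $2n$ follows, which is what the lemma claims.
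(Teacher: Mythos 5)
Your argument is essentially the paper's proof: you observe that any $s\in N_{S}(\langle g\rangle)$ permutes the two irreducible summands $U$, $W$ of $V$ (trivially when $\dim U\neq\dim W$, i.e.\ whenever $n>4$, and at most by a swap when $n=4$), and then bound the induced automorphisms of $\langle g\rangle$ via the Singer-cycle fact that $|N_{\mathrm{GL}(U)}(\langle g|_U\rangle)/C_{\mathrm{GL}(U)}(\langle g|_U\rangle)|\leq\dim U$ and likewise for $W$, giving at most $\dim U\cdot\dim W$ (times $2$ for the possible swap), which is $\leq 2n$ -- the same decomposition and the same bound as in the paper. Two small slips to correct: equality of the two dimensions occurs only at $n=4$ (not $n=3$), and the intermediate claim that the Chinese-remainder constraint yields $|N|\leq\dim U+\dim W$ is unjustified; the argument you actually use (and which suffices) is the product bound $\dim U\cdot\dim W$, matching the paper's $2(n-2)$ for $n>4$ and $8$ for $n=4$.
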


\begin{proof}
Let $C = \langle g \rangle$ and $G = \mathrm{GL}(V)$. We may assume that $S = G$. Let $h \in N_{G}(C)$. Since
\[
Uh = (UC)h = U(Ch) = U(hC) = (Uh)C,\] the vector space $Uh$ is an $FC$-submodule of $V$. Similarly, $Wh$ is also an $FC$-submodule of $V$. Since $|C|$ is coprime to $|F|$, the $FC$-module $V$ is completely reducible and so \[\{ Uh,\; Wh \} = \{ U,\; W \}.\]
It follows that the group $N_{G}(C)$ has a subgroup $N$ of index at most $2$ such that both $U$ and $W$ are $FN$-submodules of $V$. If
\[
(\dim(U), \dim(W)) \in \{\, (n-1,1),\;  (n-2,2)\, \}
\]
with $n > 4$, then $N_{G}(C) = N$ and $|N_{G}(C)/C_{G}(C)| \leq 2(n-2)$.

If $\dim(U) = \dim(W) = 2$, then $|N/C_{G}(C)| \leq 4$ and so $|N_{G}(C)/C_{G}(C)| \leq 8$.
\end{proof}

\begin{proposition}
\label{c3}
Let $\bar{S}$ be a non-abelian simple group from the following list.
\begin{enumerate}
\item[(i)] $\mathrm{PSU}(n/2,q)$ with $n$ divisible by $4$;
\item[(ii)] $\mathrm{P\Omega}(n,q)$ with $n$ odd;
\item[(iii)] $\mathrm{P\Omega}^{+}(n,q)$ with $n$ even.
\end{enumerate}
There exists a universal constant $c>0$ such that $f(\bar{S})+1$ and $\mathfrak{r}_{\mathbb{Z}}(\bar{S}) + 2$ are larger than $q^{cn}$. In particular, $f(\bar{S}) \to \infty$ and $\mathfrak{r}_{\mathbb{Z}}(\bar{S}) \to \infty$ as $|\bar{S}| \to \infty$.
\end{proposition}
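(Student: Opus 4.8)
The plan is to argue just as in Propositions \ref{c1} and \ref{c2}: inside the ambient matrix group we exhibit a cyclic subgroup of order exponential in $n$ whose normalizer induces only $O(n)$ automorphisms of it, and then we invoke Lemmas \ref{l1} and \ref{cyclic}. The one new feature is that the groups in (i)--(iii) have no Singer subgroup, so in place of an irreducible cyclic subgroup we use one whose action on the natural module $V$ has a single complement of dimension $1$ or $2$ — precisely the situation of Lemma \ref{directsum}.

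In detail, for (i), (ii), (iii) set $S = \mathrm{SU}(n/2,q)$, $\mathrm{\Omega}(n,q)$, $\mathrm{\Omega}^{+}(n,q)$ respectively; then $\bar S = S/Z(S)$ with $|Z(S)|$ dividing $n$ (and $\mathrm{PSU}(2,q)\cong\mathrm{PSL}(2,q)$ is already covered, so in (i) we may assume $n\ge 8$). Using the description of Singer subgroups of classical groups recalled at the start of Section \ref{classical} (and \cite{Huppert}), we construct a cyclic $H\le S$ with $Z(S)\le H$, with $|H|\ge\frac{1}{4}(q^{(n-2)/2}+1)$, and such that $V=U\oplus W$ as an $\langle h\rangle$-module with $U$, $W$ irreducible and $(\dim U,\dim W)\in\{(\dim V-1,1),(\dim V-2,2)\}$:
\begin{enumerate}
\item[(i)] $m:=n/2$ is even; split the Hermitian space $V=U\perp W$ with $\dim U=m-1$ odd, $\dim W=1$, and let $H$ be the image of the Singer subgroup of $\mathrm{GU}(m-1,q)$ under $x\mapsto x\oplus\det(x)^{-1}$; then $H\le\mathrm{SU}(m,q)$ is cyclic of order $q^{m-1}+1$, acts irreducibly on $U$, and contains $Z(\mathrm{SU}(m,q))$.
\item[(ii)] $n$ is odd and $Z(S)=1$; split $V=U\perp W$ with $U$ of dimension $n-1$ of minus type and $\dim W=1$, and let $H$ be the intersection with $\mathrm{\Omega}(n,q)$ of the Singer subgroup of $\mathrm{SO}^{-}(n-1,q)$ acting trivially on $W$.
\item[(iii)] $n$ is even; split $V=U\perp W$ with $U$ of dimension $n-2$ and $W$ of dimension $2$, both of minus type (so $V$ has plus type), and let $h$ act as a Singer generator on $W$ and as a generator of the Singer cycle on $U$, or of its index-$2$ subgroup as dictated by the spinor norm, so that $h\in\mathrm{\Omega}^{+}(n,q)$ and $Z(\mathrm{\Omega}^{+}(n,q))\le H$.
\end{enumerate}
Now Lemma \ref{directsum} gives $|N_S(H)/C_S(H)|\le 2\dim V\le 2n$, Lemma \ref{l1} gives $|N_{\bar S}(\bar H)/C_{\bar S}(\bar H)|\le 2n$ for the cyclic group $\bar H=H/Z(S)$ of order $\ge(q^{(n-2)/2}+1)/(4n)$, and Lemma \ref{cyclic} yields $f(\bar S)\ge\varphi(|\bar H|)/(2n)\ge\sqrt{|\bar H|}/(4n)$ and $\mathfrak{r}_{\mathbb{Z}}(\bar S)\ge\varphi(|\bar H|)/(4n)-1\ge\sqrt{|\bar H|}/(8n)-1$. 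Since $\sqrt{|\bar H|}\ge q^{(n-2)/4}/(2\sqrt n)$, both $f(\bar S)+1$ and $\mathfrak{r}_{\mathbb{Z}}(\bar S)+2$ exceed $q^{cn}$ for a small enough universal $c>0$: for the finitely many $(n,q)$ with $\sqrt{|\bar H|}/(8n)\le 1$ this is vacuous, since $f(\bar S)+1\ge 2$, $\mathfrak{r}_{\mathbb{Z}}(\bar S)+2\ge 2$ and $q^{cn}<2$ once $cn\log q$ is small, while for the remaining $(n,q)$ the displayed lower bounds do the job. In particular $f(\bar S),\mathfrak{r}_{\mathbb{Z}}(\bar S)\to\infty$ as $|\bar S|\to\infty$.

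The step I expect to be the main obstacle is the spinor-norm book-keeping: to guarantee that $h$ lies in $\mathrm{\Omega}(n,q)$, respectively $\mathrm{\Omega}^{+}(n,q)$, and that $\langle h\rangle$ contains $Z(S)$, one may be forced to replace the Singer cycle on $U$ by its (cyclic) index-$2$ subgroup, and one must then verify that this subgroup still acts irreducibly on $U$ — equivalently, that its order does not divide $q^{j}-1$ for any proper divisor $j$ of $\dim U$. A short cyclotomic computation shows this holds whenever $\dim U\ge 6$, which is automatic for all the groups in the three families, so no exceptional cases are actually hidden in the argument. Everything else is routine and uses only the facts about Singer subgroups, centralizers, and normalizers recalled at the start of Section \ref{classical} together with \cite{Huppert}; the finitely many small groups in these families whose Galois orbits stay short — such as $\mathrm{P\Omega}(7,3)$, $\mathrm{P\Omega}^{+}(8,3)$, and the groups treated in Proposition \ref{individual} — lie outside the present asymptotic statement and are dealt with separately.
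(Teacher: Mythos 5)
Your proposal is correct and takes essentially the same approach as the paper: decompose the natural module as $U\oplus W$ with $\dim W\le 2$, produce a cyclic $H\le S$ containing $Z(S)$ acting irreducibly on both summands, and combine Lemmas \ref{directsum}, \ref{l1} and \ref{cyclic}. The spinor-norm issue you flag as the main obstacle is absorbed in the paper by working directly with the $\mathrm{\Omega}^{-}$ Singer cycle (whose irreducibility is part of \cite{Huppert}) inside the subgroups $\mathrm{\Omega}^{-}(n-1,q)\times\mathrm{\Omega}(1,q)$ and $\mathrm{\Omega}^{-}(n-2,q)\times\mathrm{\Omega}^{-}(2,q)$ taken from \cite[p.~75]{Wilson}, so the separate cyclotomic verification you sketch is not actually needed.
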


\begin{proof}
Let $S$ be the group $\mathrm{SU}(n/2,q) \leq \mathrm{GL}(V)$ where $n$ is divisible by $4$ and $V$ is the vector space of dimension $n/2$ defined over the field of size $q^{2}$ equipped with a non-singular conjugate-symmetric sesquilinear form. There is a non-singular subspace $U$ of $V$ of dimension $(n/2)-1$. The vector space $W = U^{\perp}$ is a non-singular subspace of dimension $1$ and $V = U \oplus W$ as vector spaces. Since $\dim(U)$ and $\dim(W)$ are both odd, there is an element $g \in S$ of maximal possible order by \cite{Huppert} such that $H = \langle g \rangle$ acts irreducibly on both $U$ and $W$. Moreover we may choose $g$ in such a way that $H$ contains the center $Z$ of $S$. Let $\bar{S} = S/Z$ and $\bar{H} = H/Z$. It follows that
\[
|N_{\bar{S}}(\bar{H})/C_{\bar{S}}(\bar{H})| \leq |N_{S}(H)/C_{S}(H)| \leq n
\]
by Lemmas \ref{l1} and \ref{directsum}. Now $\frac{1}{d}\big (q^{(n/2)-1}+1\big ) = |\bar{H}|$ where $d$ denotes the greatest common divisor of the numbers $n/2$ and $q+1$. There exists a constant $c_{1} > 0$ such that $q^{c_{1} n} < \frac{1}{2n} \varphi(|\bar{H}|) - 1$, by the paragraph preceding Lemma \ref{cyclic}, provided that the right-hand side is larger than $1$. Thus $f(\bar{S}) +1$ and $\mathfrak{r}_{\mathbb{Z}}(\bar{S}) + 2$ are larger than $q^{c_{1}n}$ by Lemma \ref{cyclic}. In particular, $f(\bar{S}) \to \infty$ and $\mathfrak{r}_{\mathbb{Z}}(\bar{S}) \to \infty$ as $|\bar{S}| \to \infty$.

Now we view $V$ as a vector space of dimension $n$ defined over the field of size $q$. Let $V$ be equipped with a non-singular quadratic form. Let $S$ be $\mathrm{\Omega}(n,q) \leq \mathrm{GL}(V)$ with $n$ and $q$ odd, or let $S$ be $\mathrm{\Omega}^{+}(n,q) \leq \mathrm{GL}(V)$ with $n$ even. As described in \cite[p. 75]{Wilson}, there are subgroups $\mathrm{\Omega}^{-}(n-1,q) \times \mathrm{\Omega}(1,q)$ and $\mathrm{\Omega}^{-}(n-2,q) \times \mathrm{\Omega}^{-}(2,q)$ in $S$, in the respective cases, preserving a decomposition $V = U \oplus W$ where $U$ and $W$ are non-singular subspaces of $V$ with
\[
(\dim(U), \dim(W))\in\{\,(n-1,1),\; (n-2,2)\,\}.
\]
Let $S$ be any of the orthogonal groups considered in this proof. Let $\bar{S} = S/Z$ where $Z$ is the center of $S$. As in the previous paragraph, there is a cyclic group $H$ with $Z \leq H \leq S$ such that $H$ acts irreducibly on both subspaces $U$ and $W$ and
\[
|N_{\bar{S}}(\bar{H})/C_{\bar{S}}(\bar{H})| \leq 2n
\]
where $\bar{H} = H/Z$. We have $\frac{1}{4}\big(q^{(n-2)/2}+1\big)\leq |\bar{H}|$. There exists a constant $c_{2} > 0$ such that $q^{c_{2} n} < \frac{1}{4n} \varphi(|\bar{H}|) - 1$, by the paragraph preceding Lemma \ref{cyclic}, provided that the right-hand side is larger than $1$. Thus $f(\bar{S})+1$ and $\mathfrak{r}_{\mathbb{Z}}(\bar{S}) + 2$ are larger than $q^{c_{2}n}$ by Lemma \ref{cyclic}. In particular, $f(\bar{S}) \to \infty$ and $\mathfrak{r}_{\mathbb{Z}}(\bar{S}) \to \infty$ as $|\bar{S}| \to \infty$.

Finally, let $c$ be the minimum of $c_1$ and $c_2$.
\end{proof}

Let $\bar{S}$ be as in the statement of Proposition \ref{c3}.

Let $\bar{S} = \mathrm{PSU}(n/2,q)$ with $n \geq 8$ divisible by $4$. This group contains a cyclic subgroup of order divisible by $\frac{1}{d}(q^{(n/2)-1}+1)$ where $d = (n/2, q+1)$. We have
\[
 \varphi\big(\textstyle\frac{1}{d}(q^{(n/2)-1}+1)\big)> 4n
\]
unless $(n,q) \in \{ (8,2), (8,3), (12,2) \}$. Since $\mathrm{SU}(4,2) \cong \mathrm{PSp}(4,3)$, all three arising exceptional groups $\bar{S}$ satisfy $f(\bar{S}) \leq 4$ by Section \ref{computational} and appear in the list of Theorem \ref{mainmain}.

Let $\bar{S} = \mathrm{P\Omega}(n,q)$ be simple with $n \geq 7$ and $q$ both odd. This group contains a cyclic subgroup of order divisible by $\frac{1}{2} (q^{(n-1)/2}+1)$. We have
\[
 \varphi\big(\textstyle\frac{1}{2} (q^{(n-1)/2}+1) \big)> 8n.
\]
unless $(n,q) \in \{ (7,3), (7,5), (9,3), (11,3) \}$. We have $f(\mathrm{P\Omega}(7,3)) \leq 4$ by Section \ref{computational}. The remaining three groups $\bar{S}$ satisfy $f(\bar{S}) > 4$ by Proposition \ref{individual}.

Finally, let $\bar{S} = \mathrm{P\Omega}^{+}(n,q)$ with $n \geq 8$ even. This group contains a cyclic subgroup of order divisible by $\frac{1}{d} (q^{(n-2)/2}+1)$. We have
$$\varphi\big(\textstyle\frac{1}{d} (q^{(n-2)/2}+1)\big)> 8(n-2)$$
by the proof of Lemma \ref{directsum}, where $d = (2,q+1)$ unless
\[
(n,q) \in \{ (8,2), (8,3), (8,4), (8,5), (10,2), (10,3), (12,2), (12,3), (14,2), (16,2) \}.
\] The first two groups $\bar{S}$ arising, together with $\bar{S} = \mathrm{\Omega}^{+}(12,2)$, satisfy $f(\bar{S}) \leq 4$ by Section \ref{computational} and appear in the list of Theorem \ref{mainmain}. All other arising groups $\bar{S}$, apart from $\mathrm{\Omega}^{+}(12,2)$, satisfy $f(\bar{S}) > 4$ by Proposition \ref{individual}.

This finishes the proof of Theorems \ref{secondmain} and \ref{mainmain} in case $G$ is a classical group.

It remains to show Theorem \ref{rank1} for classical groups. It is sufficient to prove, by Section \ref{computational}, that if $G$ is a finite simple classical group not in the list of the statement of Theorem \ref{rank1}, then $\mathfrak{r}_{\mathbb{Z}}(G) > 1$. Moreover, since Theorem \ref{mainmain} is established for classical groups, it is sufficient to show that if $G$ is a classical group appearing in the list of Theorem \ref{mainmain} but not appearing in the list of the statement of Theorem \ref{rank1}, then $\mathfrak{r}_{\mathbb{Z}}(G) > 1$. This follows from the content of Section \ref{computational}.

\section{Simple groups of Lie type}
\label{exceptional}

An aim of this section is to complete the proofs of Theorems \ref{mainmain} and \ref{rank1} by dealing with the remaining class of finite simple groups, the exceptional simple groups of Lie type.

The next proposition is a consequence of Tables I and II in the paper \cite{Babai_Palfy_Saxl} of Babai, P\'alfy and Saxl.

\begin{proposition}
\label{propo}
Let $\bar{S}$ be an exceptional simple group of Lie type defined over the field of size $q$ or the Tits group $\tF{2}'$. Then $f(\bar{S}) \leq 4$ if and only if
\[
\bar{S} \in \{\;  \tD{2},\; \tE{2},\;  \mathrm{F}_{4}(2),\;  \mathrm{G}_{2}(3),\;  \mathrm{G}_{2}(4),
 \; \tB{8}, \; \tF{2}' \; \}.
\]
 It follows that $\mathfrak{r}_{\mathbb{Z}}(\bar{S}) \geq 2$ unless $\bar{S} \in \{ \tE{2}, \mathrm{F}_{4}(2), \mathrm{G}_{2}(3) \}$ when $\mathfrak{r}_{\mathbb{Z}}(\bar{S}) = 1$. Moreover, $f(\bar{S})+1$ and $\mathfrak{r}_{\mathbb{Z}}(\bar{S}) + 2$ are larger than $q^{c}$ for some universal constant $c>0$. In particular, if $|\bar{S}| \to \infty$, then $f(\bar{S}) \to \infty$ and $\mathfrak{r}_{\mathbb{Z}}(\bar{S}) \to \infty$.
\end{proposition}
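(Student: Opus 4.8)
The plan is to adapt to the exceptional types the strategy already used for classical groups in Section~\ref{classical}: exhibit inside $\bar{S}$ a cyclic subgroup $H$ whose order is polynomial in $q$ of positive degree and whose normalizer induces only boundedly many automorphisms on it, then feed this into Lemma~\ref{cyclic}. The needed input is exactly what is tabulated in Tables~I and~II of \cite{Babai_Palfy_Saxl}: for each exceptional type the associated simply connected group $S$ of Lie type has a cyclic maximal torus of order a cyclotomic value $\Phi_d(q)$ --- for instance $q^{2}\mp q+1$ for $\mathrm{G}_2$, $q^{4}-q^{2}+1$ for $\mathrm{F}_4$ and $\tD{q}$, $q^{6}+q^{3}+1$ for $\mathrm{E}_6$, $q^{6}-q^{3}+1$ for $\tE{q}$, $\Phi_{30}(q)$ for $\mathrm{E}_8$, and the twisted analogues $q\mp\sqrt{2q}+1$, $q\mp\sqrt{3q}+1$, $q^{2}\pm\sqrt{2q^{3}}+q\pm\sqrt{2q}+1$ for the Suzuki, Ree and ${}^{2}\mathrm{F}_4$ families --- with relative Weyl group of order at most $30$. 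Since such a torus contains the center $Z$ of $S$, Lemma~\ref{l1} lets me descend to $\bar{S}=S/Z$: I obtain a cyclic $H\leq\bar{S}$ with $n:=|N_{\bar{S}}(H)/C_{\bar{S}}(H)|\leq 30$ and $|H|$ at least $\Phi_d(q)/|Z|$, still polynomial in $q$ of degree $\geq 1$; in particular $|H|\geq q$.

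For the asymptotic claims, Lemma~\ref{cyclic} then gives $f(\bar{S})\geq\varphi(|H|)/n\geq\sqrt{|H|}/(2n)\geq\sqrt{q}/60$ and $\mathfrak{r}_{\mathbb{Z}}(\bar{S})\geq\varphi(|H|)/(2n)-1\geq\sqrt{q}/120-1$, so $f(\bar{S})+1$ and $\mathfrak{r}_{\mathbb{Z}}(\bar{S})+2$ both exceed $q^{c}$ once $q$ passes an absolute bound, for $c>0$ sufficiently small; for the finitely many $q$ below that bound one uses instead $f(\bar{S})+1\geq 2$ and $\mathfrak{r}_{\mathbb{Z}}(\bar{S})+2\geq 2$ and shrinks $c$ so that $q^{c}<2$ there, whence the same $c$ works in all cases. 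Since $|H|\to\infty$ within each of the finitely many families, this also yields $f(\bar{S}),\mathfrak{r}_{\mathbb{Z}}(\bar{S})\to\infty$ as $|\bar{S}|\to\infty$.

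For the classification, the last line of Lemma~\ref{cyclic} turns $f(\bar{S})\leq 4$ into $\varphi(|H|)\leq 4n$, which bounds $q$ in each type; substituting the cyclotomic values above --- using a second torus where one does not suffice, e.g.\ both $q^{2}-q+1$ and $q^{2}+q+1$ for $\mathrm{G}_2$ and both $q\pm\sqrt{3q}+1$ for the Ree groups --- leaves only finitely many pairs (type,~$q$), with $\mathrm{E}_6,\mathrm{E}_7,\mathrm{E}_8$ contributing nothing (even $q=2$ is excluded there). Discarding the non-simple members $\mathrm{G}_2(2)$, $\tF{2}$, $\tB{2}$, $\tG{3}$, exactly $\mathrm{G}_2(3)$, $\mathrm{G}_2(4)$, $\mathrm{F}_4(2)$, $\tE{2}$, $\tD{2}$, $\tB{8}$ and the Tits group $\tF{2}'$ survive. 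For each of these seven the character table lies in \cite{CTblLib}, and by Section~\ref{computational} the function \texttt{MaximalGaloisOrbitLength} returns $2$ on $\mathrm{G}_2(3),\mathrm{G}_2(4),\mathrm{F}_4(2),\tE{2}$, the value $3$ on $\tD{2}$ and $\tB{8}$, and $4$ on $\tF{2}'$, all $\leq 4$; this gives the stated equivalence, and \texttt{RankOfCentralUnits} then shows $\mathfrak{r}_{\mathbb{Z}}(\bar{S})=1$ precisely for $\tE{2}$, $\mathrm{F}_4(2)$, $\mathrm{G}_2(3)$ and $\mathfrak{r}_{\mathbb{Z}}(\bar{S})\geq 2$ otherwise.

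The hard part is the bookkeeping in the classification step: one must pass through every exceptional type --- with particular care for the Suzuki, Ree, triality $\tD{q}$ and ${}^{2}\mathrm{F}_4$ families, whose ``cyclotomic'' torus orders and relative Weyl groups take twisted shapes --- extract from Tables~I and~II of \cite{Babai_Palfy_Saxl} a suitable cyclic subgroup together with its automorphism bound, verify at each borderline $q$ that the relevant totient exceeds $4n$, and keep track (through Lemma~\ref{l1}) that the chosen subgroup descends to the simple quotient with its normalizer quotient still controlled, while setting aside the non-simple members of the families.
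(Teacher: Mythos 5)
Your proposal is correct and takes essentially the same route as the paper: both rely on the cyclic maximal tori and normalizer orders tabulated in Tables~I and~II of Babai--P\'alfy--Saxl, feed $\varphi(|T|)$ versus $4\,|N_{\bar S}(T)/T|$ into Lemma~\ref{cyclic} to cut down to the seven candidate groups, and then dispose of those by the explicit \textsf{GAP} computations of Section~\ref{computational}. The only cosmetic difference is that you spell out the descent to $\bar S = S/Z$ via Lemma~\ref{l1} and list the cyclotomic torus orders explicitly, whereas the paper reads off the data for the simple groups directly from the tables and omits that intermediate step.
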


\begin{proof}
Tables I and II in \cite{Babai_Palfy_Saxl} give information about cyclic tori $T$ in exceptional groups $\bar{S}$ of Lie type including the Tits group $\bar{S} = \tF{2}'$. The groups $T$ satisfy $C_{\bar{S}}(T) = T$ and the middle columns in the tables provide $|T|$. The $|T|$ are polynomials in $q$ where $q$ is the size of the field of definition for $\bar{S}$. The exact values of $|N_{\bar{S}}(T)/T|$ are also provided in \cite[Tables I and II]{Babai_Palfy_Saxl}. All such entries are at most $30$. The third and fourth statements of the proposition follow from Lemma \ref{cyclic}.

Also, \cite[Tables I and II]{Babai_Palfy_Saxl} shows that for every $\bar{S}$ there is a $T$ such that $4 |N_{\bar{S}}(T)/T|< \varphi(|T|)$ unless
\[
\bar{S} \in \{ \; \tD{2}, \; \tE{2}, \; \mathrm{F}_{4}(2), \; \mathrm{G}_{2}(3), \;
\mathrm{G}_{2}(4), \; \tB{8}, \; \tF{2}' \; \}.
\]
These seven groups $\bar{S}$ satisfy $f(\bar{S}) \leq 4$ by Section \ref{computational}. The first statement now follows from Lemma \ref{cyclic}. Moreover, among these seven groups only $\tE{2}$, $\mathrm{F}_{4}(2)$ and $\mathrm{G}_{2}(3)$ satisfy $\mathfrak{r}_{\mathbb{Z}}(\bar{S}) = 1$ and $\mathfrak{r}_{\mathbb{Z}}(\bar{S}) \geq 2$ for the others, by Section \ref{computational}. The second statement follows.
\end{proof}

The other aim of this section is to summarize some of our results on simple groups of Lie type.

\begin{proposition}
\label{lie}
There exists a universal constant $c > 0$ such that whenever $G$ is a finite simple group of Lie type of Lie rank $r$ defined over the field of size $q$ then $\mathfrak{r}_{\mathbb{Z}}(G) > q^{cr}$ provided that $\mathfrak{r}_{\mathbb{Z}}(G) > 1$.
\end{proposition}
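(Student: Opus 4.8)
The plan is to reduce Proposition~\ref{lie} to the family-by-family results already established in this section, namely Propositions~\ref{c2} and~\ref{c3} for classical groups and Proposition~\ref{propo} for exceptional groups (including the Tits group), together with the treatment of alternating groups in Section~7. Each of those results provides, for the relevant family, a lower bound of the shape $f(G)+1 > q^{c'r}$ (or $q^{c'n}$, which is the same thing up to renaming the constant, since the Lie rank of a classical group is comparable to its natural dimension $n$) and, more to the point, a bound $\mathfrak{r}_{\mathbb{Z}}(G)+2 > q^{c'r}$ obtained by exhibiting a suitable cyclic torus $H$ with $|N_{\bar S}(\bar H)/C_{\bar S}(\bar H)|$ bounded by a linear function of $r$ and $|H|$ exponentially large in $r$, then invoking Lemma~\ref{cyclic}(ii).

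First I would recall that by the reductions of Section~4 it suffices to treat simple groups of Lie type, and that the sporadic groups are irrelevant here. Then I would split into cases according to the list of simple groups of Lie type: the classical groups fall into the families covered by Propositions~\ref{c2} and~\ref{c3} (special linear, unitary, symplectic, and both types of orthogonal, in their projective simple versions), and Proposition~\ref{propo} disposes of all exceptional types and the Tits group with a single universal constant coming from the bounded entries ($\leq 30$) in Tables~I and~II of \cite{Babai_Palfy_Saxl}. In each classical case, Lemma~\ref{cyclic}(ii) applied to the cyclic subgroup $\bar H$ produced there gives $\mathfrak{r}_{\mathbb{Z}}(\bar S) \geq \varphi(|\bar H|)/(2n) - 1 \geq \sqrt{|\bar H|}/(4n) - 1$; since $|\bar H|$ is (a bounded multiple of) $q^{m}$ with $m$ a fixed fraction of $n$ (e.g.\ $n/2$, $(n/2)-1$, or $(n-2)/2$), and the denominators are linear in $n$, the right-hand side exceeds $q^{c r}$ for a suitable universal $c>0$ once $q^{m}$ is large enough compared with $n^{2}$, i.e.\ for all but finitely many $(n,q)$ in each family.

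The remaining point is the finitely many small groups in each family that are exceptions to the clean inequality $\varphi(|\bar H|) > (\text{linear in } n)$. For these one cannot use the asymptotic argument, so instead one checks directly: the relevant groups either already appear in Theorem~\ref{mainmain} (hence have $f(G)\leq 4$, in which case $\mathfrak{r}_{\mathbb{Z}}(G) \leq 1$ and the hypothesis $\mathfrak{r}_{\mathbb{Z}}(G)>1$ is vacuous) or are explicitly handled in Section~\ref{computational} and Proposition~\ref{individual}, where the character-table computations give the exact value of $\mathfrak{r}_{\mathbb{Z}}(G)$. In every one of those finitely many cases either $\mathfrak{r}_{\mathbb{Z}}(G) \leq 1$ (vacuous hypothesis) or $\mathfrak{r}_{\mathbb{Z}}(G)$ is a concrete number that, together with the bounded Lie rank and field size, trivially satisfies $\mathfrak{r}_{\mathbb{Z}}(G) > q^{cr}$ after possibly shrinking $c$. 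Finally I would take $c$ to be the minimum of the finitely many constants obtained across all the families (those from Propositions~\ref{c2}, \ref{c3}, \ref{propo}, and the one needed to absorb the finite list of small groups), yielding the single universal constant claimed.

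\textbf{Main obstacle.} The genuine work is bookkeeping rather than a new idea: one must make sure that the passage from $f(\bar S)+1 > q^{cn}$ (or the analogous $\mathfrak{r}_{\mathbb{Z}}+2$ bound) in terms of the natural dimension $n$ to a bound in terms of the Lie rank $r$ is uniform across families, since $r$ and $n$ differ by bounded multiplicative and additive constants that vary by type (e.g.\ $r = n-1$ for $\mathrm{PSL}(n,q)$, $r = n/2$ for $\mathrm{PSp}(n,q)$, and so on), and that the finitely many exceptional small groups in each family are genuinely accounted for by Section~\ref{computational} and Proposition~\ref{individual} with the hypothesis $\mathfrak{r}_{\mathbb{Z}}(G)>1$ excluding exactly the cases where the bound would otherwise fail. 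Once these are organized, the proof is a short assembly of already-proved statements.
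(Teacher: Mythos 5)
Your proposal reduces Proposition~\ref{lie} to Propositions~\ref{c2}, \ref{c3}, and~\ref{propo}, which is exactly the route the paper takes (its proof is the single sentence ``This follows from Propositions~\ref{c2}, \ref{c3} and~\ref{propo}''), and you correctly identify the book-keeping issue of converting the natural dimension $n$ appearing in those propositions to the Lie rank $r$. Since $r\leq n$ for the classical families and $r\leq 8$ for the exceptional types, $q^{cn}\geq q^{cr}$ and $q^{c}\geq q^{(c/8)r}$, so this step is indeed just a renaming of the constant.

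Two small inaccuracies in your write-up are worth flagging. First, the parenthetical ``hence have $f(G)\leq 4$, in which case $\mathfrak{r}_{\mathbb{Z}}(G)\leq 1$'' is false: Proposition~\ref{individual} itself records $f(\Omega^+(12,2))=4$ together with $\mathfrak{r}_{\mathbb{Z}}(\Omega^+(12,2))=17$. This does not damage your argument, because your next sentence covers such cases anyway by shrinking $c$. Second, and more importantly, the whole discussion of ``finitely many small exceptional groups'' is not actually needed: Propositions~\ref{c2}, \ref{c3}, and~\ref{propo} are stated with no exceptions, giving $\mathfrak{r}_{\mathbb{Z}}(\bar S)+2 > q^{cn}$ for \emph{every} group in the respective families. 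Passing from this to $\mathfrak{r}_{\mathbb{Z}}(G) > q^{c'r}$ uses only the hypothesis $\mathfrak{r}_{\mathbb{Z}}(G)>1$ (so $\mathfrak{r}_{\mathbb{Z}}(G)\geq 2$) and a halving of the constant: if $q^{cn}\geq 4$ then $\mathfrak{r}_{\mathbb{Z}}(G)>q^{cn}-2\geq q^{cn/2}$, while if $q^{cn}<4$ then $\mathfrak{r}_{\mathbb{Z}}(G)\geq 2 > q^{cn/2}$. No appeal to Section~\ref{computational}, Proposition~\ref{individual}, or Section~7 (which concerns alternating groups, not groups of Lie type) is required. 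With those simplifications your argument coincides with the paper's.
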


\begin{proof}
This follows from Propositions \ref{c2}, \ref{c3} and \ref{propo}.
\end{proof}

\section{Alternating groups}

As a continuation of \cite{Jucys}, Patay was first to consider the structure of central units of $\mathbb{Z}A_n$ in the case when $A_n$ is the alternating group of degree $n$.

By results of Frobenius \cite{Frob1} and formula (\ref{E:1}), the number $\mathfrak{r}_{\mathbb{Z}}(A_{n})$ is equal to the number of partitions $\lambda = (\lambda_{1}, \ldots , \lambda_{k})$ of $n$ that satisfy the following conditions: (a) $\lambda_{i}$ is odd, $1 \leq i \leq k$; (b) the $\lambda_{i}$ are pairwise distinct; (c) $n \equiv k \pmod 4$; (d) $\prod_{i=1}^{k} \lambda_{i}$ is not a square. This fact was well-known in the 1970's and it was used by Patay to classify groups $A_n$ with $\mathfrak{r}_{\mathbb{Z}}(A_{n}) = 0$.

There is an ``experimental formula'' in \cite[p. 166]{Aleev_Kargapolov_Sokolov} for the behavior of $\mathfrak{r}_{\mathbb{Z}}(A_{n})$ for large $n$. Here we prove a weaker statement.

\begin{proposition}
\label{alternating}
If $n \geq 26$, then $\mathfrak{r}_{\mathbb{Z}}(A_{n}) > c^{\sqrt{n}}$ for some universal constant $c > 1$.
\end{proposition}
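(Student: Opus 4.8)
The plan is to exploit the combinatorial description of $\mathfrak{r}_{\mathbb{Z}}(A_n)$ recorded just above: it counts the partitions $\lambda=(\lambda_1>\cdots>\lambda_k)$ of $n$ into distinct odd parts with $k\equiv n\pmod 4$ whose product $\prod_i\lambda_i$ is not a square. I would construct, for every large $n$, an explicitly indexed family of $2^{\lfloor\sqrt n/5\rfloor}$ such partitions; this already gives $\mathfrak{r}_{\mathbb{Z}}(A_n)\ge 2^{\lfloor\sqrt n/5\rfloor}>c^{\sqrt n}$ with, say, $c=2^{1/6}$. The only input concerning condition (d) that I need is the trivial remark that if the largest part $\lambda_1$ is a prime $p$, then $\prod_i\lambda_i$ is automatically a non-square: every other part lies in $[1,p)$ and is therefore coprime to $p$, so $p$ divides the product exactly once.

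For the construction, fix $L=\lfloor\sqrt n/5\rfloor$ and let $O_L=\{1,3,\dots,2L-1\}$, whose sum is $L^2\le n/25$. To a subset $S\subseteq O_L$, with $s=|S|$ and $\sigma=\sum_{a\in S}a$, I attach a ``tail'': put $R=n-\sigma\ge\tfrac{24}{25}n$, let $j\in\{5,6,7,8\}$ be the unique value with $j\equiv n-s\pmod 4$ (this also forces the parity $j\equiv R\pmod 2$ needed below), and write $R$ as a sum of $j$ distinct odd integers, each exceeding $2L-1$, with prime maximum. That this is possible for all large $n$ is where Bertrand's postulate enters: pick a prime $p$ with $R/2<p\le R-\mu$, where $\mu=O(\sqrt n)$ is the least sum of $j-1$ distinct odd integers exceeding $2L-1$, and then split the remainder $R-p$ into $j-1$ distinct odd parts in the range $(2L-1,p)$ --- feasible because $R-p$ has the correct parity and $R=\Theta(n)$ dwarfs all the $O(\sqrt n)$-sized quantities in play. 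The partition $\phi(S)$ so produced has distinct odd parts, sums to $n$, has $s+j\equiv n\pmod 4$ parts, and has prime largest part, hence non-square product; moreover $\phi$ is injective since $S$ is exactly the set of parts of $\phi(S)$ that are $\le 2L-1$. Thus $\mathfrak{r}_{\mathbb{Z}}(A_n)\ge 2^L$ for $n\ge n_0$, and $2^{\lfloor\sqrt n/5\rfloor}>2^{\sqrt n/6}=c^{\sqrt n}$ once $\sqrt n>30$; the finitely many remaining $n$ with $26\le n<\max(n_0,900)$ are settled by directly evaluating the partition count (which equals $5$ already at $n=26$ and increases).

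The crux --- and the only place where genuine care is needed --- is the tail step: three demands must be met at once (exact sum $R$, parts-count residue modulo $4$, primality of the top part), and one has to verify they are jointly satisfiable, which reduces to Bertrand's postulate together with routine size comparisons between $R$ and the $O(\sqrt n)$ quantities governing the tail. Everything else (the valuation remark securing (d), injectivity of $\phi$, and the reduction to finitely many small $n$) is immediate. A cosmetically cleaner-looking alternative would be to quote the $\exp(\Theta(\sqrt n))$ asymptotics for partitions of $n$ into distinct odd parts and argue that imposing $k\equiv n\pmod 4$ and non-squareness changes only the implied constant; but making those corrections rigorous does not look shorter than the direct construction above.
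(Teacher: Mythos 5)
Your proposal is correct and rests on the same two pillars as the paper's proof: the Frobenius-type count of partitions of $n$ into distinct odd parts with $k\equiv n\pmod 4$ and non-square product, and the observation that a prime largest part automatically forces a non-square product (odd $p$-adic valuation). The construction of the exponentially large family is, however, genuinely different. The paper fixes a single prime $p$ (the least prime exceeding $n/2$), chooses a parts-count $k\sim\sqrt{p}/10$ of the right residue mod $4$, and counts partitions of $m=(n-p-k^2+1)/2$ into exactly $k-1$ unordered parts; a two-step transformation (make the parts distinct, then ``double and add one,'' then append $p$) turns each into an admissible partition of $n$, and the binomial/factorial estimate $\binom{m-1}{k-2}/(k-1)!\ge c^{\sqrt n}$ finishes the job. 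You instead fix a block of small odd integers $O_L=\{1,3,\dots,2L-1\}$ with $L=\lfloor\sqrt n/5\rfloor$, vary $S\subseteq O_L$ freely to get $2^L$ choices, and absorb the slack $R=n-\sum S$ into a short tail of $j\in\{5,\dots,8\}$ large distinct odd parts, choosing $j$ to fix the residue mod $4$ and placing a prime at the top via a Bertrand-type argument; injectivity of the map $S\mapsto\phi(S)$ is immediate from the size dichotomy between $S$ and the tail. Your approach buys a cleaner counting step (a power set rather than a binomial-over-factorial estimate) at the cost of needing a fresh prime in a shrinking interval for every $S$, i.e.\ a somewhat more delicate prime-existence check, whereas the paper needs only one prime and has it by the prime number theorem (or Bertrand) once and for all. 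Two small points to tighten: the interval for $p$ should be checked to be nonempty for all sufficiently large $n$ (which it is, since $\mu=O(\sqrt n)$ while $R=\Theta(n)$, but this deserves a sentence), and the final constant must be shrunk (e.g.\ to $c=2^{1/30}$ rather than $2^{1/6}$) so that the finitely many exceptional $26\le n<n_0$ are covered merely by $\mathfrak{r}_{\mathbb{Z}}(A_n)\ge 2$, which is known from Aleev--Kargapolov--Sokolov.
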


\begin{proof}
Let $n$ be an integer at least $26$. Then $\mathfrak{r}_{\mathbb{Z}}(A_{n}) > 1$ by \cite{Aleev_Kargapolov_Sokolov}. In order to prove the claim, we may assume that $n$ is sufficiently large.

Let $p$ be the smallest prime greater than $n/2$. By the prime number theorem, we may assume that $p < 2n/3$ provided that $n$ is sufficiently large. Let $k$ be an integer $x$ such that $|x - (\sqrt{p}/10)|$ is as small as possible and $x$ is congruent to $n$ modulo $4$. Let $m$ be the integer satisfying
\[
2m + k^{2} -1 = n-p.
\]
By our condition $p < 2n/3$ and the definition of $k$, we have $m > n/10$ provided that $n$ is sufficiently large.

The number of $(k-1)$-tuples $(x_{1}, \ldots , x_{k-1})$ of positive integers $x_{1}, \ldots , x_{k-1}$ such that $m = x_{1} + \cdots + x_{k-1}$ is $\binom{m-1}{k-2}$. There are at most $(k-1)!$ ways to order the $x_i$. Thus the number of partitions of $m$ into exactly $k-1$ parts is at least
\[
\textstyle\frac{1}{(k-1)!} \binom{m-1}{k-2} \geq {\Big(\frac{1}{k-1}\Big)}^{k-2} {\Big(\frac{m-1}{k-2}\Big)}^{k-2}.
\]
This is at least $c^{\sqrt{n}}$ for some constant $c > 1$, provided that $n$ is sufficiently large.

We claim that $\mathfrak{r}_{\mathbb{Z}}(A_{n})$ is at least the number of partitions of $m$ into exactly $k-1$ parts. For this we use the description of certain partitions found before the statement of this proposition.

Let $\pi$ be a partition of $m$ into exactly $k-1$ parts. For each $i$ with $1 \leq i \leq k-1$, add $i$ to the $i$-th smallest part in $\pi$. Let the resulting partition be $\pi'$. Now multiply each part of $\pi'$ by $2$ and add $1$. Let the resulting partition be $\pi''$. Finally, add a part equal to $p$ to $\pi''$ to obtain the partition $\pi'''$. This is a partition of $p+2m+k^{2}-1 = n$ into exactly $k$ parts each of distinct odd lengths such that the product of the parts is not a square (since $p$ divides this number but $p^{2}$ does not). Finally, if $\pi_{1}$ and $\pi_{2}$ are partitions of $m$ into exactly $k-1$ parts providing the partition $\pi'''$ in the described way, then $\pi_{1} = \pi_{2}$.
\end{proof}

\section{Proof of Theorem \ref{secondmain}}

Let $k(G)$ denote the number of conjugacy classes of the finite group $G$. This is equal to the number of complex irreducible characters of $G$. For any finite group $G$ we have $\mathfrak{r}_{\mathbb{Z}}(G) < k(G)$ by (\ref{E:1}). The theorem is true for $G$ a cyclic group of prime order, again by (\ref{E:1}).

In order to prove our statement, we may assume that $G$ is a sufficiently large non-abelian finite simple group. In particular, we may assume that $G$ is not a sporadic simple group. Thus $G$ is an alternating group or a simple group of Lie type. The last statement follows from Propositions \ref{alternating} and \ref{lie} (together with \cite[Theorem 5.1]{BCJM} and Theorem \ref{rank1}).

We have $k(A_{n}) < d^{\sqrt{n}}$ for some universal constant $d>1$ by classical results on the number of partitions of $n$. If $G$ is a finite simple group of Lie type of Lie rank $r$ defined over the field of size $q$, then $k(G) \leq  {(6 q)}^{r} < q^{4r}$ by \cite[Theorem 1]{LiebeckPyber}. (For an improvement of this latter bound see \cite{FulmanGuralnick}.) From these and Propositions \ref{alternating} and \ref{lie} it follows that there exists a universal constant $c'>0$ such that ${k(G)}^{c'} < \mathfrak{r}_{\mathbb{Z}}(G)$ for any finite simple group $G$ with $\mathfrak{r}_{\mathbb{Z}}(G) > 1$.

\section*{Acknowledgement}
We thank Rifkhat Z. Aleev for some discussions on this topic especially during the preparation of the first version of this paper.

\end{document}